%
%
%
%
%

%
\documentclass[12pt]{article}

\usepackage{lipsum}
\usepackage{fancyhdr}
\usepackage{booktabs}
\usepackage{array}
\usepackage{amssymb}
\usepackage{parskip}
\usepackage{hyperref}
\usepackage{enumitem}
\usepackage{mathtools} 
\usepackage{xparse,textcomp}
\DeclarePairedDelimiter\floor{\lfloor}{\rfloor}

\usepackage[lofdepth,lotdepth]{subfig}

\setlength{\textfloatsep}{1.4pt plus 5.0pt minus 4.0pt}
\setlength{\intextsep}{1.4cm plus 2.0pt minus 6pt} 
\setlength{\parindent}{0em} 
\setlength{\parskip}{1.0em}


\usepackage{cite}

\usepackage{parskip}
\usepackage[margin=0.8in]{geometry}
\newcommand{\forceindent}{\leavevmode{\parindent=1.5em\indent}}
\usepackage{algorithm} 
\usepackage{algpseudocode}
\newcommand{\R}{\mathbb R}

\newcommand{\maxmath}{\mathop{}\!\mathrm{max}}
\newcommand{\minmath}{\mathop{}\!\mathrm{min}}
\newcommand{\limmath}{\mathop{}\!\mathrm{lim}}
\newcommand{\Prmath}{\mathop{}\!\mathrm{Pr}}
\usepackage{amsmath,amsthm}
\makeatletter

\usepackage[belowskip=-30pt,aboveskip=10pt]{caption} 
\usepackage{amsfonts}
\usepackage{amsopn}

\usepackage{amssymb,amsmath}

\renewcommand\section{\leftskip 0pt\@startsection {section}{1}{\z@}%
	{-3.5ex \@plus -1ex \@minus -.2ex}%
	{2.3ex \@plus.2ex}%
	{\normalfont\Large\bfseries}}

\renewcommand\subsection{\leftskip 0pt\@startsection{subsection}{2}{\z@}%
	{-3.25ex\@plus -1ex \@minus -.2ex}%
	{1.5ex \@plus.2ex}%
	{\normalfont\large\bfseries}}

\renewcommand\subsubsection{\leftskip 0pt\@startsection{subsubsection}{3}{\z@}%
	{-3.25ex\@plus -1ex \@minus -.2ex}%
	{1.5ex \@plus .2ex}%
	{\normalfont\large\bfseries}}
%
\usepackage{graphicx}

\usepackage{geometry}
\geometry{
	a4paper,
	total={170mm,257mm},
	left=20mm,
	top=20mm,
	bottom=20mm,
}

\newtheorem{definition}{Definition}

\newtheorem{Theorem}{Theorem}[section]

\newtheorem{corollary}[Theorem]{Corollary}

\theoremstyle{remark}
\newtheorem{remark}[Theorem]{Remark}

%
%
%
%
%
\begin{document}
	
\title{ \centering{Manifold Reconstruction and Denoising from Scattered Data in High Dimension via a Generalization of $L_1$-Median}}
\author{Shira Faigenbaum-Golovin${^{1, *}}$~~David Levin${^1}$ \\
	\small{${^1}$ School of Mathematical Sciences, Tel Aviv University, Israel}
	\\
	\small{${^*}$ Corresponding author, E-mail address: alecsan1@post.tau.ac.il} 
}


\maketitle
\begin{abstract}
In this paper, we present a method for denoising and reconstruction of low-dimensional manifold in high-dimensional space. We suggest a multidimensional extension of the Locally Optimal Projection algorithm which was introduced by Lipman et al. in 2007 for surface reconstruction in 3D. The method bypasses the curse of dimensionality and avoids the need for carrying out dimensional reduction. It is based on a non-convex optimization problem, which leverages a generalization of the outlier robust L1-median to higher dimensions while generating noise-free quasi-uniformly distributed points reconstructing the unknown low-dimensional manifold. We develop a new algorithm and prove that it converges to a local stationary solution with a bounded linear rate of convergence in case the starting point is close enough to the local minimum. In addition, we show that its approximation order is $O(h^2)$, where $h$ is the representative distance between the given points. We demonstrate the effectiveness of our approach by considering different manifold topologies with various amounts of noise, including a case of a manifold of different co-dimensions at different locations.

\end{abstract}

\noindent\textbf{keywords:} Manifold learning, Manifold denoising, Manifold reconstruction, High dimensions, Dimensional reduction

\noindent\textbf{MSC classification:} 65D99 \\
(Numerical analysis - Numerical approximation and computational geometry)

\section{Introduction}
High-dimensional data is increasingly available in many fields, and the problem of extracting valuable information from such data is of primal interest. Often, the data suffers from the presence of noise, outliers, and non-uniform sampling, which can influence the result of the mining task.  We can address this problem by denoising a single sample, an approach extensively used in the last decades (the denoising method is often data-driven). However, it is still a challenge to produce a good noise-free result from a single sample with a large amount of noise present. Frequently, classical denoising algorithms lose the battle, since they denoise a single sample and overlook the intrinsic connections between different samples acquired from a chosen domain. As a result, obtaining a dataset of samples with certain properties can boost the denoising process. A common practice is to assume that the high-dimensional input data lies on an intrinsically low-dimensional Riemannian manifold.

\forceindent For instance, with the development of image processing, the task of image denoising gained a lot of attention (see, e.g., \cite{elad2006image, starck2002curvelet, mahmoudi2005fast}). Thus, given a single image, the task is to find its noise-free image. Now, let us consider a collection of noisy images depicting a single object, controlled by several parameters (such as a set of faces or written letters rotated in different directions). This collection can be modeled by a manifold, and this representation can be utilized to produce a superior denoising result. A real-life case, which motivated the current research, is cryo-electron microscopy \cite{singer2011viewing}. In this problem a single image is a projection of a three-dimensional macromolecule into a two-dimensional representation (Figure \ref{fig:cryoFig} (A)). Cryo-electron microscopy images are known to suffer from extremely low signal to noise ratio (Figure \ref{fig:cryoFig} (C)), and consequently classical denoising methods usually do not perform well on such samples. Nevertheless, using the fact that the images are sampled from a manifold (each corresponding to the molecule projected in a different direction) can facilitate the denoising task. Figure \ref{fig:cryoFig} (B) shows a collection of images, each depicting a projection of the simulated molecule in Figure \ref{fig:cryoFig} (A), captured in various directions. Thus, we transfer the problem from single image denoising to denoising the entire image set – which is treated as scattered data sampled from a manifold. 

\begin{figure}[H] 
	\centering
	\label{fig:b}\includegraphics[width=\textwidth,height=\textheight,keepaspectratio]{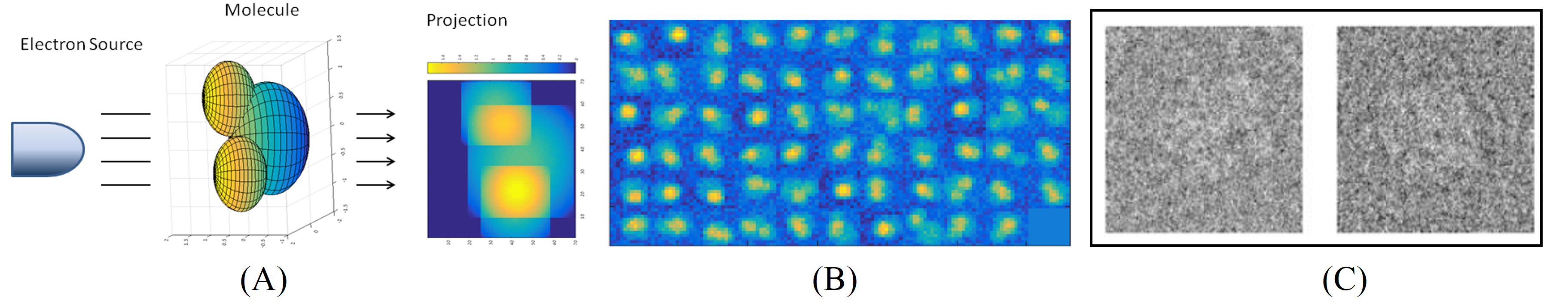}
	\caption{(A) Illustration of the cryo-electron microscopy projection process, where a 3D molecule is rotated and projected to 2D. (B) Collection of the artificial projections of the molecule with noise, where each image is the molecule rotated in a different direction. (C) Two real electron microscope images of the E. coli 50S ribosomal subunit (image is taken from \cite{singer2011viewing}). These images demonstrate the denoising challenge of extremely low signal to noise ratio.}
	\label{fig:cryoFig}
\end{figure}

In this paper, we address the problem of manifold denoising and reconstruction. Let $\mathcal{M}$ be a $d$-dimensional manifold in $\mathbb{R}^n$, where $d\ll n$. Suppose that the scattered data $P=\{p_j\}_{j=1}^J$ were sampled near $\mathcal{M}$ and contain noise and outliers. We wish to find a noise-free reconstruction of the geometry of $\mathcal{M}$ in $\mathbb{R}^n$.

\forceindent Before we turn to high-dimensional data, we first consider the simpler, yet challenging problem of surface reconstruction. While the problem of low-dimensional reconstruction was thoroughly studied along the years \cite{alexa2003computing, berger2017survey, cohen1999progressive, levin2004mesh, lipman2007parameterization}, there are still many challenges which modern applications (e.g., computer graphics) pose. One of them is surface reconstruction with preservation of features \cite{huang2013edge, yadav2018constraint}. The available methods commonly assume almost noise-free data and rely on normal estimation. Unfortunately, in real-life cases, noise is often present, and normal estimation may not be robust enough (despite various processes for cleaning the normals). Let us mention here the Parameterization-free Projection method for geometry reconstruction proposed in \cite{lipman2007parameterization}, which offers a solution that can handle high levels of noise. This method does not require a well-defined surface parameterization, avoids using local surface approximation and normal estimation, is cheap, and can be parallelized due to its local support. In \cite{lipman2007parameterization}, it was demonstrated by various examples that the method is stable with respect to outliers, different density of sampling and varying topology. 

\forceindent In the high-dimensional case, the problem of manifold reconstruction still requires additional attention. The era of proliferation of high-dimensional data raised the need for efficient denoising and reconstruction algorithms for manifolds. The application of classical approximation tools, developed for surfaces, to high-dimensional data, encounters various challenges, usually stemming from the high-dimension, and presence of noise. For instance, given a uniform sampling  in $\R^{n}$ on a grid with spacing $h=1/L$ requires $L^n$ samples and when $L \ll 10$ this is already challenging for $n \ll 10$. Moreover, classical approximation methods assume smoothness of order $s$, which is closely related to the approximation error. For example, for $J$ sample points, the reconstruction accuracy can be of the order of $O(J^{-s/n})$, which implies that we need to increase the amount of data as the domain dimension increases \cite{bachmayr2014approximation}. As a result, in the high-dimensional case, the problem of manifold reconstruction still requires additional attention especially to the problem of denoising and reconstructing manifold.

\forceindent A common way of dealing with high-dimensional data is to use dimensionality reduction. The motivation often stems from the need to analyze, process, and visualize high-dimensional data. Along the years many dimensionality reduction techniques were developed (PCA \cite{pearson1901liii}, Multidimensional Scaling \cite{cox2000multidimensional}, Linear Discriminant Analysis \cite{fisher1936use}, Locality Preserving Projections \cite{he2004locality},  Locally Linear Embedding \cite{roweis2000nonlinear}, ISOMAP \cite{tenenbaum2000global}, Diffusion Maps \cite{coifman2005geometric}, and Neural Networks in their general form, \cite{lin2008riemannian}, to mention just a few). However, one has to be careful when performing dimensionality reduction, since meaningful information can be lost due to the assumptions made. One fundamental challenge of dimensionality reduction is knowing or estimating the dimension of the data. In addition, since the geometry of the data is usually unknown, it is common to use an assumption regarding its geometrical structure (and use linear/non-linear algorithms accordingly). As a result, in the case of real-life data, it is still a challenge to address these issues, mainly because such assumptions have a direct influence on the usage of dimensionality reduction methods, and may, therefore, hamper the results of the analysis performed. For a comprehensive survey of manifold learning methods that rely on dimensionality reduction, see \cite{lin2008riemannian}.

\forceindent An alternative practice for handling high-dimensional data is manifold learning in high-dimensional space. Thus, instead of making assumptions on the geometry of the manifold, its intrinsic dimension and reducing the dimension of the data, the mining task is performed in a high-dimensional space. This approach has several advantages, as well as disadvantages. On the one hand, there is no loss of information. On the other hand, the dimension of the data influences the efficiency and feasibility of the algorithms, and it is possible that one will not be able to see the forest for the tree. An additional important factor of high-dimensional data is noise, which is usually present in real-life scenarios. In Table \ref{survey_table} we give a short survey of manifold reconstruction methods that avoid performing dimensionality reduction. Among the first papers that addressed the manifold reconstruction problem was \cite{cheng2005manifold}. The method presented therein relied on Delaunay triangulation, and as the authors themselves noted, it was impractical, mainly because it requires a very dense and noise-free sample, and also because it makes use of (weighted) Delaunay triangulation in higher dimensions. Next, in \cite{niyogi2008finding2} it was proposed to use simplicial complexes. In that paper, the authors also address the challenge of noisy samples, under certain conditions. This work was followed by \cite{boissonnat2009manifold}, which aimed at avoiding computing the Delaunay triangulation of the given set of points by using a Witness complex via an iterative process, and by \cite{boissonnat2014manifold} which addressed the problem using a Tangential Delaunay complex. Unfortunately, this method dealt only with noise-free samples. Next in \cite{maggioni2016multiscale}, the authors proposed to learn a data-dependent dictionary from clean data in the chosen resolution level and use it for the manifold reconstruction of possibly noisy data. Later, in \cite{fefferman2018fitting}, it was suggested to use a covering of the manifold by discs to deal with a small amount of Gaussian noise. The recent paper \cite{sober2016manifold} proposed to address manifold denoising under various noisy scenarios, when the intrinsic dimension of the manifold is known, by extending the Moving Least Squares method \cite{levin2004mesh} to the high-dimensional case. Finally, the paper \cite{aamari2019nonasymptotic} address the problem manifold reconstruction and of tangent space and curvature estimation by using local polynomials.

\newcolumntype{P}[1]{>{\raggedright\arraybackslash}p{#1}}
\setlength{\extrarowheight}{8pt}
\footnotesize

\begingroup
\setlength{\tabcolsep}{8pt} 
\renewcommand{\arraystretch}{1} 

\begin{table}[H]
	\caption{Survey of manifold reconstruction methods, that avoid performing dimensionality reduction}
	\footnotesize
	\linespread{0.8}\selectfont\centering
	\label{survey_table}
	\begin{tabular}{@{}P{1.5cm}P{1.4cm}P{2.1cm}P{2.3cm}P{1.3cm}P{1.1cm}P{1.4cm}P{2.9cm}@{}} 
		\toprule
		Authors                                          & Algorithm key features                       & Assumptions on the data                                                & Sampling                                                                                     & Handle noise                               & Error   & Numerical exp. & Complexity, $N$ is $\#$points, $d$ is ID, $n$ is the dim. of the ambient space                                                                                                                                                                    \\ \midrule
		Cheng, et al. \cite{cheng2005manifold}           & Weighted Delaunay triangulation & Compact manifold, smooth, no boundary                   & Sufficiently dense point sample                                                            & Noise-free sample                          & homeo-morphic & N/A                   & $O(N\text{ log}(N))$                                                                                                                                                                                
		\\
		Niyogi, et al. \cite{niyogi2008finding2}         & Simplicial complex                &   Suffcient amount of  points                                                                     &                                               & Bounded/ specific models of noise & homeo-morphic & N/A                   & N/A                                                                                                                                                                                          \\
		Boissonnat, et al. \cite{boissonnat2009manifold} & Witness complex                   & Positive reach (i.e. $C^1$-continuous)                                   & Not necessarily uniformly sampled, minimal local density & Low noise level                            & homeo-morphic & N/A                   & $N^2 d^{O(d^2)}$                                                                                                                                                                                
		
		\\
		Chazal, et al. \cite{chazal2011geometric}            & Distance functions with probability distribution                              &                                        &    Regularity of the input data                                                                     & Bounded/ specific models of	noise                             & homotopic            & \checkmark                     & N/A
		
		\\
		Boissonnat, et al. \cite{boissonnat2014manifold} & Tangential Delaunay complex      &  Smooth manifold, positive reach                                
		& Sampling ratio, point sparsity, and the reach hold a condition  & Noise-free sample                            & homeo-morphic & N/A                   & $O(n)N^2+n2^{O(d^2)}N$                                                                                                     \\
		Maggioni, et al. \cite{maggioni2016multiscale}  & Dictionary                        & Smooth closed manifold, $d$ is known  & Homogeneous, reconstruct new noisy samples    & Additive noise, dictionary is built from clean samples                        & \checkmark            & \checkmark                     & $O(C^d (n+d^2)\epsilon^{-(1-\frac{d}{2})} log \frac{1}{\epsilon}+dn)$, where $C$ is a constant, and  $\epsilon$ is reconstruction error                                                                                                                                                                                                                                                                                                                                                       \\
		Fefferman, et al. \cite{fefferman2018fitting}   & Disk stitching                    & Reach is bounded                                                       &                                                                                              & Additive noise                             & \checkmark            & N/A                   & N/A                                                                                                                                                                                          \\
		Sober, Levin \cite{sober2016manifold}            & Moving Least Squares                               & $d$ is known, bounded reach                                       &                                                                                              & Additive noise                             & \checkmark            & \checkmark                     & $O(d^3m+Nd^m+NI)$, $I$-$\#$points in supp., $m$ is the approx. degree
		\\
		Aamari, Levrard \cite{aamari2019nonasymptotic}            & Local Polynomials                               & $d$ and order of regularity are known                                       &                                                                                              & Bounded/ specific models of	noise                             & \checkmark            & N/A                     & N/A

		\\ \bottomrule
	\end{tabular}
	\vspace{-10mm}
	
\end{table}
\endgroup
\normalsize

The methods listed in the table provide a strong theoretical background, but most of them are not accompanied by numerical examples (except \cite{chazal2011geometric, maggioni2016multiscale, sober2016manifold}), which is an important aspect of evaluating the method execution. In addition, unfortunately, as can be seen from the table, handling noisy data, non-uniformly sampled, with no assumption on the data, is still a challenge in high-dimensional cases. In this paper, we propose denoising and reconstructing the manifold geometry in a high-dimensional space in the presence of high amounts of noise and outliers. We will tackle the manifold approximation question by extending the Locally Optimal Projection algorithm \cite{lipman2007parameterization} to the high-dimensional case. The proposed algorithm is simple, fast and efficient, and does not require any additional assumption. Our theoretical analysis is accompanied by numerical examples of various manifolds with different amounts of noise.

\section{High-Dimensional Denoising and Reconstruction}

\label{sec:framework}

The Locally Optimal Projection (LOP) method was introduced in \cite{lipman2007parameterization} to approximate two-dimensional surfaces in $\mathbb{R}^3$ from point set data.The procedure does not require the estimation of local normals and planes, or parametric representations. In addition, the method performs well in the case of noisy samples. Due to its flexibility and satisfactory results, it has been extended to address other challenges related to surfaces \cite{huang2013edge, huang2009consolidation, su2011curvature}. 

\forceindent Herein we generalize the LOP mechanism to perform what we call \textit{Manifold Locally Optimal Projection (MLOP)}. The vanilla LOP is not able to cope with high-dimensional data, mainly due to the sensitivity of the norm to noise and outliers (as will be discussed in details in subsection \ref{sec:highDimDistSec}). In addition, other adaptations are required due to practical reasons (as will be described in the end of this section). 

\forceindent First, we adapt the $h$-$\rho$ condition defined for scattered-data approximation functions (in \cite{levin1998approximation}, defined for low-dimensional data), to handle finite discrete data on manifolds.

\begin{definition}\label{def:def4}
	\textbf{$h$-$\rho$ sets of fill-distance $h$, and density $\leq \rho$} with respect to the manifold $\mathcal{M}$. Let $\mathcal{M}$ be a $d$-dimensional manifold in $\mathbb{R}^n$ and consider a set of data points  $P=\{{p_j }\}_{i=1}^J$ sampled from $\mathcal{M}$. We say that $P$ is an 
	$h$-$\rho$ set if:\\
	1.   $h_0$ is the fill-distance, i.e., $h_0=\maxmath_{y\in M}\minmath_{p_j\in P}\|y-p_j\|$.\\
	2.   The density of the points can be bounded as $\#\{P \cap \bar{B}(y,kh_0)\}\leq \rho k^d, \quad k \geq 1, \quad y\in \mathcal{M}$. \\
	Here $\#Y$ denotes the number of elements in a set $Y$ and $\bar{B}(x,r)$ denotes the closed ball of radius $r$ centered at $x$. 
\end{definition}

\forceindent Note that the last condition regarding the point separation $\delta$ defined in \cite{levin1998approximation}, which states that there $\exists \delta >0$ such that $\|p_i-p_j \| \geq \delta, \quad 1\leq i \leq j \leq J$, is redundant in the case of finite data. 

\forceindent The setting for the high-dimensional reconstruction problem is the following: Let $\mathcal{M}$ be a manifold in $\mathbb{R}^n$, of unknown intrinsic dimension $d \ll n$. One is given a noisy point-cloud $P=\{p_j\}_{j=1}^J \subset \mathbb{R}^n$ situated near the manifold $\mathcal{M}$, such that $P$ is a $h$-$\rho$ set. We wish to find a new point-set $Q=\{q_i\}_{i=1}^I \subset \mathbb{R}^n$  which will serve as a noise-free approximation of $\mathcal{M}$. We seek a solution in the form of a new point-set $Q$, which will replace the given data $P$, provide a noise-free approximation of $\mathcal{M}$, and which is quasi-uniformly distributed. This is achieved by leveraging the well-studied weighted $L_1$-median \cite{vardi2000multivariate} used in the LOP algorithm and requiring a quasi-uniform distribution of points $q_i \in Q$. 
These ideas are encoded by the cost function

\vspace{-12mm}
\begin{equation} \label{eq:1}	 G(Q) = E_1(P,Q)+\Lambda E_2(Q) = \sum\limits_{q_i \in Q} \sum\limits_{pj \in P} \|q_i-p_j\|_{H_{\epsilon}} w_{i,j} +
\sum\limits_{q_i \in Q} \lambda_i \sum\limits_{q_{i'} \in Q \backslash \{q_i\}} \eta(\|q_i-q_i'\|) \hat w_{i,i'}\,, \end{equation}
where the weights $w_{i,j}$ are given by rapidly decreasing smooth functions. In our implementation we used $w_{i,j} = \exp\big\{\!-\|q_i-p_j\|^2/{h_1^2}\big\}$
and $\widehat w_{i,i'} = \exp\big\{\!-\|q_i-q_i'\|^2/{h_2^2}\big\}$. Here, we replace the $L_1$-norm used in \cite{lipman2007parameterization} by the "norm" $\| \cdot \|_{H_{\epsilon}}$ introduced in \cite{levin2015between} as $\|v\|_{H_\epsilon}=\sqrt{v^2+\epsilon}$, where $\epsilon >0$ is a fixed parameter (in our case we take $\epsilon=0.1$). As shown in \cite{levin2015between}, using $\| \cdot \|_{H_{\epsilon}}$ instead of $\| \cdot \|_1$ has the advantage that one works with a smooth cost function and outliers can be removed. In addition, $h_1$ and $h_2$ are the support size parameters of $w_{i,j}$ and $\hat w_{i,i'}$ that guarantee a sufficient amount of $P$ or $Q$ points for the reconstruction. We provide additional details on how to estimate the support size, in Subsection \ref{sec:optH}. Also, $\eta(r)$ is a decreasing function such that $\eta(0)= \infty $; in our case we take $\eta(r) = \frac {1} {3r^3}$. Finally, $\{\lambda_i\}_{i=1}^I$ are constant balancing parameters.

\forceindent We will now give some intuition about the definition of the cost function $G$. We can describe the cost function in \eqref{eq:1} in terms borrowed from electromagnetism, where an electron generates an electric field that exerts an attractive force on a particle with a positive charge, such as the proton, and a repulsive force on a particle with a negative charge. In our scenario, we have attraction forces between the $Q$-points and the original $P$-points, and repulsion forces between the $Q$-points to themselves in order to make them spread out in a quasi-uniform manner (Figure \ref{fig:attractionForces}). An additional way of looking at the target function is to view the solution using a service center approach: placing a distribution of service centers $q_i \in Q$ to best serve the customers $P$, such that the service centers are spread uniformly. Thus, in case we have more points in $P$ than in the reconstruction, each center $q_i \in Q$ will serve a certain amount of $P$-points in its neighborhood.
\begin{remark}
	We do not require that the amount of the points in the reconstruction ($Q$), and the size of the original sample set ($P$) be the same. This flexibility allows downsampling and upsampling in order to decode or encode manifold information.
\end{remark}
\vspace{-10mm}
\begin{figure}[H]
	\centering
	\label{fig:c}\includegraphics[width=\textwidth,height=4.5cm,keepaspectratio]{./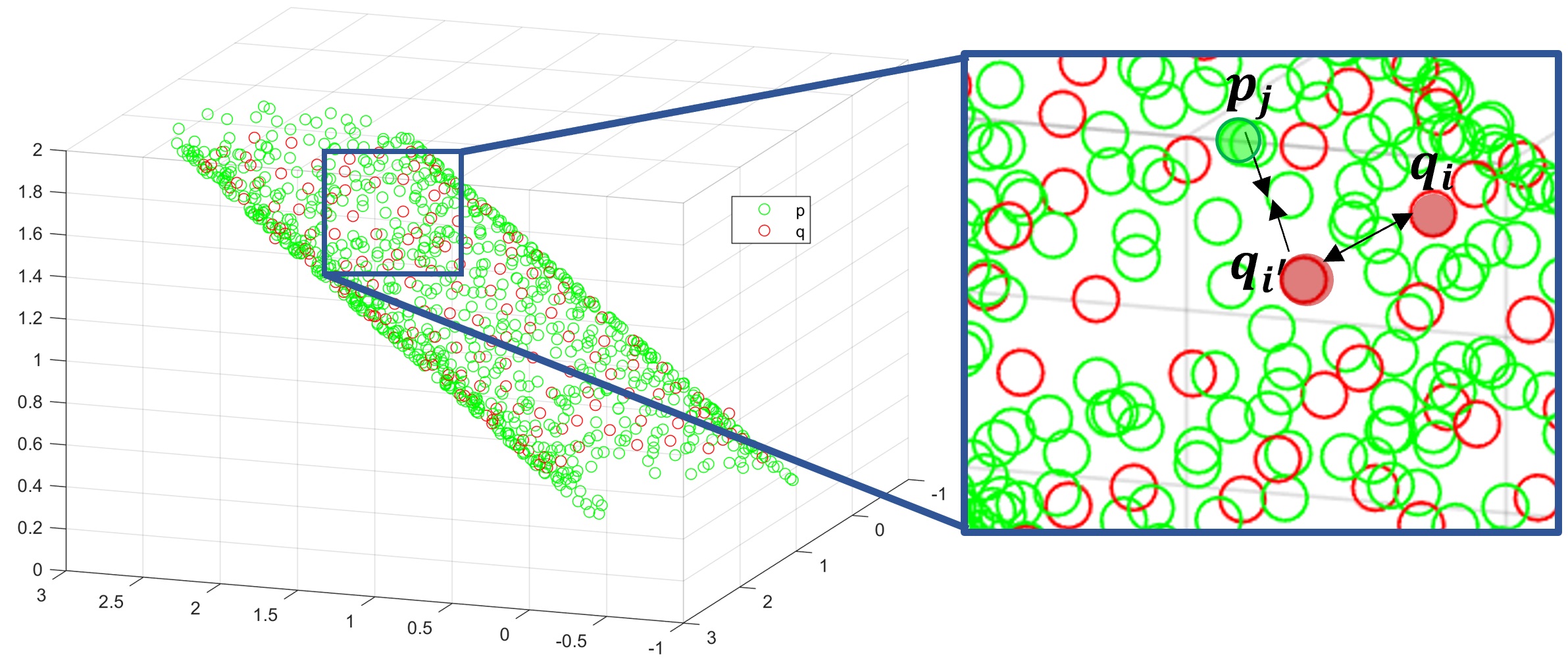} 
	\caption{Illustration of the cost function during manifold reconstruction: each point from the reconstruction set $Q$ (red points) is attracted to points in $P$ (green dots), and repelled by other points in $Q$ according to their distance.}
	\label{fig:attractionForces}
\end{figure}
In order to solve the problem with the cost function \eqref{eq:1}, we look for a point-set $Q$ that minimizes $G(Q)$. 
The solution $Q$ is found via the gradient descent iterations
\begin{equation}
q_{i'}^{(k+1)}=q_{i'}^{(k)}-\gamma_k \nabla G(q_{i'}^{(k)}),\qquad i'=1,\dots,I \,, 
\end{equation}
where the initial guess $\{q_i^{(0)}\}_{i=1}^I=Q^{(0)}$ consists of points are sampled from $P$. \\
The gradient of $G$ is given by 
\begin{equation}  \label{eq:GradG} \nabla G(q_{i'}^{(k)}) = \sum\limits_{j=1}^J {\big(q_{i'}^{(k)}-p_j\big) \alpha_j^{i'}}- \lambda_{i'}\sum\limits_{\substack {i=1 \\ i\neq i'}}^I{ \big(q_{i'}^{(k)} - q_{i}^{(k)} \big) \beta_i^{i'}}\,, 
\end{equation}
with the coefficients $\alpha_j^{i'}$ and $\beta_j^{i'}$  given by the formulas
\begin{equation}
\label{eq:alpha}
\alpha_j^{i'}  = \frac{w_{i,j}}{ \|q_i-p_j\|_{H_{\epsilon}}} \left(1-\frac{2}{h_1^2}\|q_i-p_j\|_{H_{\epsilon}}^2\right)
\end{equation}
and 
\begin{equation}
\label{eq:beta}
\beta_i^{i'} = \frac{\widehat w_{i,i'}}{ \|q_i-q_{i'}\|} \left( \left| {\frac{\partial \eta \left( \|q_i-q_{i'}\| \right) } {\partial r}}\right|  +
\frac{2\eta \left( \|q_i-q_{i'}\| \right) } {h_2^2}   \|q_i-q_{i'}\|
\right),	 
\end{equation}
for 	$i=1,...,I$, $i\ne i'$.
In order to balance the two terms in $\nabla G(q_{i'}^{(k)})$, the factors $\lambda_{i'}$ are initialized in the first iteration as
\begin{equation}  \label{eq:2}
\lambda_{i'} = -\,\frac{\bigg\|\sum\limits_{j=1}^J {\big(q_{i'}^{(k)}-p_j\big) \alpha_j^{i'}} \bigg\|}{\bigg\|\sum\limits_{i=1}^I{\big(q_{i'}^{(k)} - q_{i}^{(k)} \big) \beta_i^{i'}} \bigg\|}\,.
\end{equation}

Balancing the contribution of the two terms is important in order to maintain equal influence of the attraction and repulsion forces in $G(Q)$.
The step size in the direction of the gradient $\gamma_k$ is calculated following the procedure suggested by Barzilai and Borwein in  \cite{barzilai1988two}, as
\begin{eqnarray} \label{gamma_k} \gamma_k = \frac{\langle \bigtriangleup q_{i'}^{(k)},  \bigtriangleup G_{i'}^{(k)} \rangle} {\langle \bigtriangleup G_{i'}^{(k)},  \bigtriangleup G_{i'}^{(k)} \rangle}\,,  \end{eqnarray}
where  $\bigtriangleup q_{i'}^{(k)}  = q_{i'}^{(k)}  - q_{i'}^{(k-1)}$ and $\bigtriangleup G_{i'}^{(k)}  = \nabla G_{i'}^{(k)}  - \nabla G_{i'}^{(k-1)}$.\\

The reconstruction process is summarized in Algorithm \ref{alg:Alg1_} below:
\vspace{-6mm}
\begin{algorithm} [H]
	\caption{MLOP: Iterative Manifold Reconstruction}
	\label{alg:Alg1_}
	\begin{algorithmic}[1]
		\State {\bfseries Input:} $P=\{p_j\}_{j=1}^J \subset \mathbb{R}^n$, $\epsilon>0$
		\State{\bfseries Output:} $Q=\{q_i\}_{i=1}^I \subset \mathbb{R}^n$
		\State Initialize $Q^{(0)}$ as a subsample of $P$
		\State Estimate $h_1$ and $h_2$
		\Repeat
		\For {each $q_{i'}^{(k)} \in Q^{(k)}$}
		\State{Calculate $\nabla G(q_{i'}^{(k)})$ by assessing $\alpha_j^{i'}$, $\beta_i^{i'}$}
		\State{$q_{i'}^{(k+1)}=q_{i'}^{(k)}-\gamma_k \nabla G(q_{i'}^{(k)})$}
		\EndFor
		\Until{$\|\nabla G(q_{i'}^{(k)})\| <\epsilon$}
	\end{algorithmic}
\end{algorithm}
\vspace{-12mm}

Naturally, several changes were made to the LOP algorithm when shifting from the low-dimension to high-dimensional case. The \textbf{main enhancements of the LOP algorithm which were introduced in MLOP for high-dimensional space} can be summarized in the following list: 
\begin{enumerate}[noitemsep]
	\item The problem is reformulated in terms of looking for a new set $Q$ which will maintain the conditions in \eqref{eq:1}. This change is taken into account when taking the derivatives.
	
	\item The $L_1$ norm used in $E_1$ is replaced with the $H_{\epsilon}$, defined in \cite{levin2015between} as $\|v\|_{H_\epsilon}=\sqrt{v^2+\epsilon}$, where $\epsilon >0$ is a fixed parameter. The motivation behind this is to have a "norm" which is less sensitive to outliers. Instead of squares of errors or the absolute values of the errors, we will use an error measure that behaves as squared error for small errors and as an absolute error if the error is large. Please note that we change the norm only in the first term in \eqref{eq:1} to cope with the outliers in $P$.

	\item The norm calculation is modified to cope with high-dimensional data with noise, by using the sketching technique. For more details see Section \eqref{sec:highDimDistSec}.
	
	\item From practical reasons, we replace the fixed point iterations used in \cite{lipman2007parameterization}, with a gradient descent. The motivation behind it was to use a methodology that will allow easier theoretical analysis of the already challenging non-convex function $G$. 
	
	\item A new definition for the balancing terms $\lambda_i$ is suggested, such that the $\lambda_i$ does not change along the iterations (and there is no need to take the their derivatives). 
	
	\item Different support sizes are used when looking at the support of a given point $q_i$ with respect to $P$ and with respect to $Q$. This is natural when the number of points in $P$ and $Q$ differ. In addition, we propose a procedure for estimating these parameters (see Section \eqref{sec:optH}).
\end{enumerate}
\vspace{-3mm}
\section{Practical Details}
In Section \ref{sec:framework} we introduced the method for high-dimensional denoising and reconstruction, by optimizing a cost function that leverages the proximity to the original data and asks for quasi-uniform reconstruction. In the following two subs-sections, we will discuss several practical aspects related to robust high-dimensional distance calculation, as well as the optimal selection of the support of the weight function $w_{i,j}$.
\vspace{-6mm}
\subsection{Robust Distance Calculation in High Dimensions}
\label{sec:highDimDistSec}
The reasoning in terms of Euclidean distances, which is the cornerstone of ~Algorithm \ref{alg:Alg1_}, works well in low dimensions, e.g., for the reconstruction of surfaces in 3D, but breaks down in high dimensions once noise is present. For example, consider three points $A$, $B$ and $C$ in $\mathbb{R}^2$ (Figure \ref{fig:higDimDist} (A)), where the points $A$ and $B$ are close, whereas the point $C$ is far. Next, we embed these points in to $\mathbb{R}^{60}$ with a uniformly additive noise distribution $U(-0.2, 0.2)$ (for example in Figure \ref{fig:higDimDist} (D) we plot one of the points in $\mathbb{R}^{60}$). Unfortunately, the noise completely wipes out the signal and as a result far points cannot be distinguished from adjacent ones, see Figure \ref{fig:higDimDist} (B) (see \cite{aggarwal2001surprising, domingos2012few}). 

\forceindent To deal with this issue, we perform dimension reduction via random linear sketching \cite{woodruff2014sketching}. It should be emphasized that the dimension reduction procedure is utilized solely for the calculation of norms, and the manifold reconstruction is performed in the high-dimensional space. Given a point $x \in \mathbb{R}^n$, we project it to a lower dimension $m \ll n$ using a random matrix, $S$, with certain properties (its construction is described in detail in Algorithm \ref{alg:Alg2_}). Subsequently, the norm of  $\|S^t x\|$ will approximate $\|x\|$. Figure \ref{fig:higDimDist} (C) shows that calculating the distance in lower-dimensional space solves the distance conflicts. 

\forceindent In Algorithm \ref{alg:Alg2_} we present the details of finding the matrix $S \in \mathbb{R}^{n\times m}$. For given scattered data points $P=\{p_j\}_{j=1}^J \subset \mathbb{R}^n$ we construct matrix $S$ only once during the initialization process of ~Algorithm \ref{alg:Alg1_}. Next, given a new point $x \in \mathbb{R}^n$, its norm is approximated as $\|S^t x\|$ and utilized only for the gradient calculations in (\ref{eq:GradG}). In this paper, we choose to perform a global linear projection. However, for additional accuracy, it is possible to find a local transformation for each neighborhood.

\begin{remark}
	How should we choose the dimension \textup{m} of the space on which we project the data? First, if the dimension of the manifold $\mathcal{M}$ is known, this information can be utilized for setting \textup{m}. Alternatively, one can calculate a rough estimate, or apply a local \textup{PCA}, and use the number of the dominant eigenvalues. In our examples, the typical size of \textup{m} was set to \textup{10}.
\end{remark}

\vspace{-7mm}
\begin{algorithm} [H]
	\caption{Robust Distance Calculation in High Dimensions}
	\label{alg:Alg2_}
	\begin{algorithmic}[1]
		\State {\bfseries Input:} $P=\{p_j\}_{j=1}^J \subset \mathbb{R}^n$, $m$
		\State {\bfseries Output:} $S$ - an $n \times m$ matrix
		\State {Sample $G\in \R^{J\times m}$ with $G\sim N(0, 1)$.}
		\State {Compute $B \in \R^{n \times m}$ as $B:=P^{\rm t}G$.}
		\State {Calculate the QR decomposition of $B$ as $B = SR$, where $S \in \R^{n \times m}$ has orthonormal columns and $R\in \R^{m \times m }$ is upper triangular.}
	\end{algorithmic}
\end{algorithm}
\vspace{-12mm}

\begin{figure}[H]
	\centering
	\captionsetup[subfloat]{farskip=0pt,captionskip=0pt, aboveskip=0pt}
	\subfloat[][]{ \includegraphics[width=0.33\textwidth]{./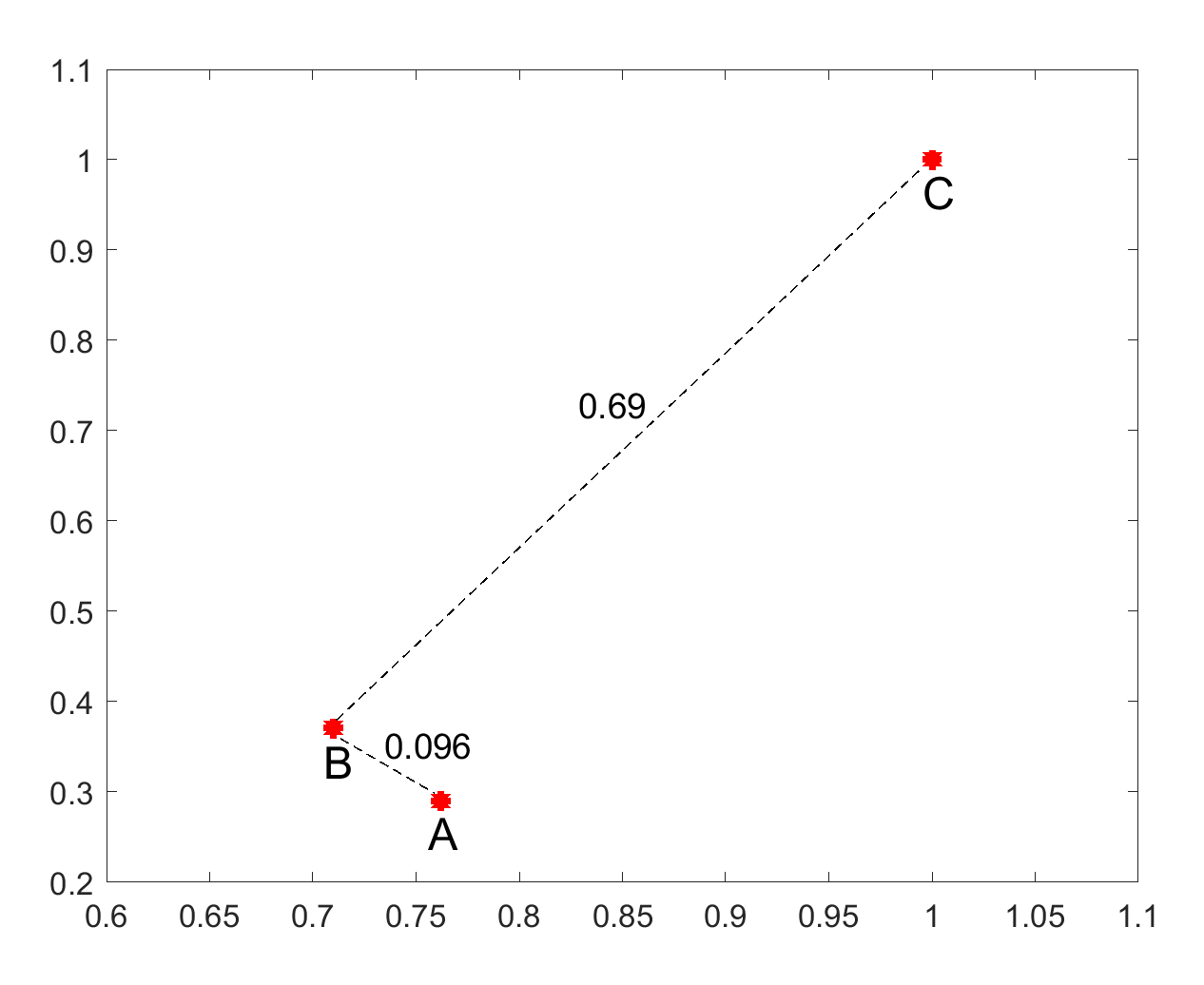} } \hspace{-1.7em}
	\subfloat[][]{ \includegraphics[width=0.33\textwidth]{./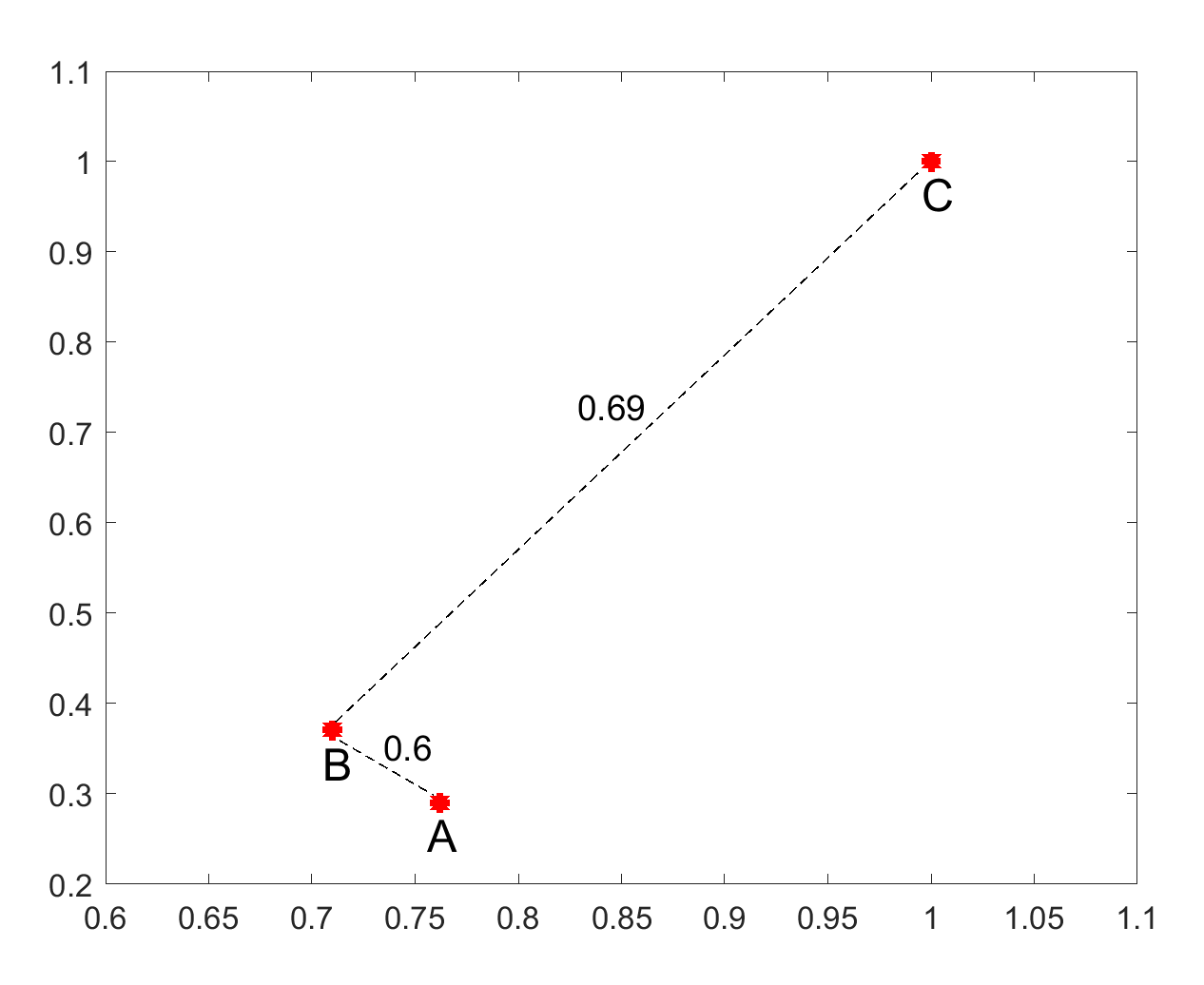} } \hspace{-1.7em}
	\subfloat[][]{ \includegraphics[width=0.33\textwidth]{./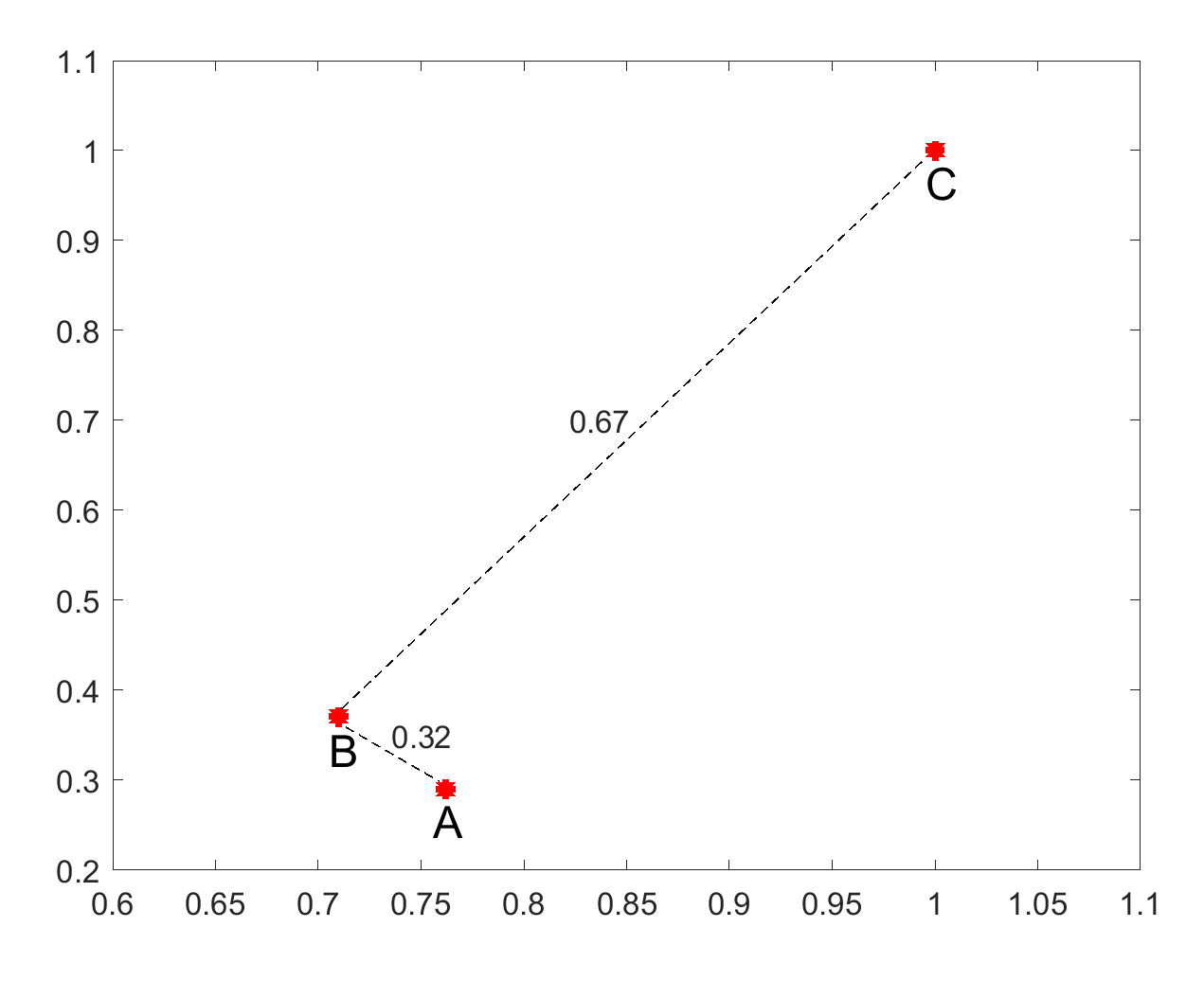} } \\[-0.7ex]
	\subfloat[][]{\includegraphics[width=0.6\textwidth]{./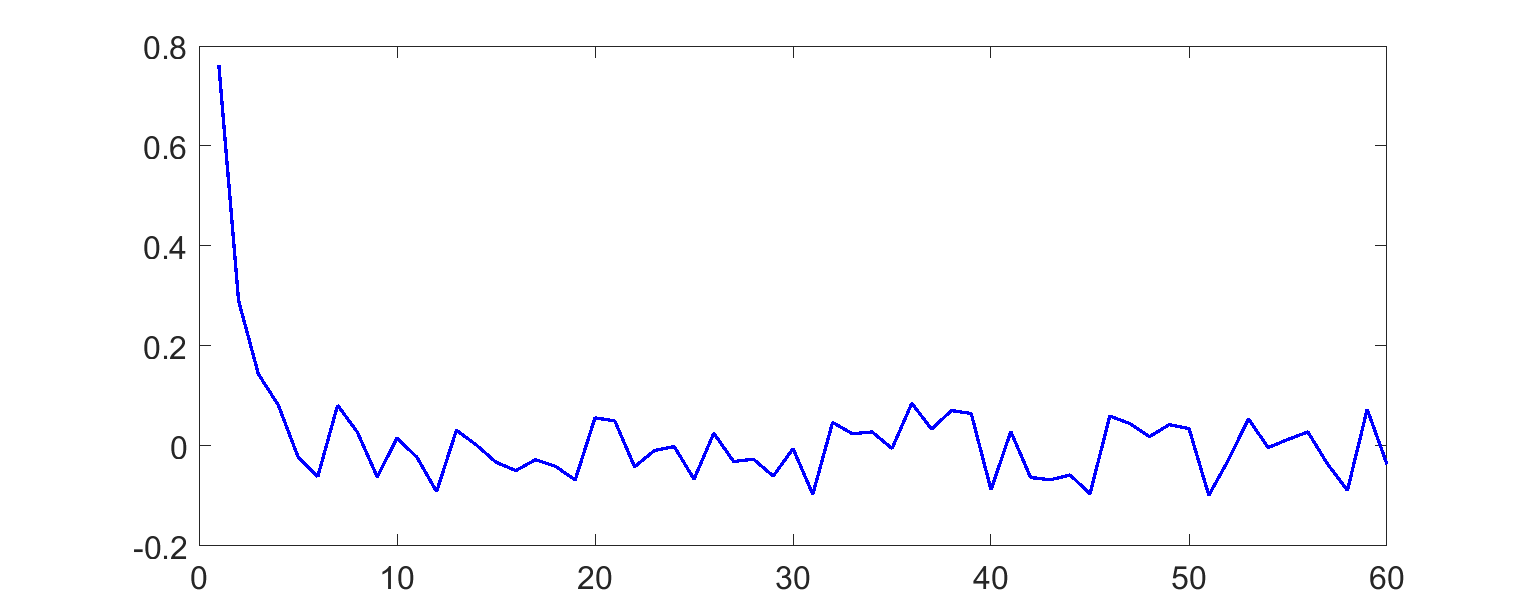}}
	\caption{Calculating distances in low-and high-dimensional space. (A) Distance calculation of points in $\R^2$. (B) Distance calculation of points in $\R^2$ embedded into $\R^{60}$ + noise $U(-0.2; 0.2)$; (C) Distance calculation of points in $\R^2$ embedded into $\R^{60}$ + noise: after sketching, (D) Point A embedded into $\R^{60}$ + noise.}
	\label{fig:higDimDist}
\end{figure}

\subsection{Optimal Neighborhood Selection}
\label{sec:optH}

In this subsection we consider the support size of the locally supported weight functions $w_{i,j}$ utilized in \eqref{eq:1} for manifold reconstruction. Specifically, given a point-set $X = \{x_k\}_{k=1}^K$, we address the problem of choosing a support size $h$ that will guarantee a sufficient amount of points from $X$ in the neighborhood of a point $q_i$ during the MLOP approximation. Although, the LOP technique has gained much popularity, and many extensions were suggested. However, the proper choice of neighboring points to be used in the reconstruction still remains an important open problem. From the one side, taking points far from the tested point can be influenced by the changing geometry of the manifold, from the other side if the neighborhood size is too small we can lose the robustness to noise property. As a result, support size selection is a critical point when dealing with a fast decaying weight function, and it is important to find an estimate to it (e.g., see the analysis for the MLS case in \cite{lipman2006error}).

\forceindent There is a high degree of freedom in choosing the points participating in the approximation since the number of data points is usually very large. Naturally, one would like to make use of these large degrees of freedom to achieve the “best” reconstruction. In what follows, we use the service centers considerations in order to approximate $h$ as a radius of the ball containing the $K$-nearest neighbors. It should be noted that naturally, we look for two parameters $h_1$, and $h_2$, defined as the support sizes of $q_i$ with respect to $P$ and $Q$, respectively. The reason for having different supports is due to the fact that the number of points in $P$ and $Q$ can differ, and this should be reflected in the choice of their support size. As will be demonstrated in the numerical examples section, our approach outperforms the heuristic choice of support size in approximation quality and stability.

\forceindent The support sizes $h_1$, and $h_2$ are closely related to the fill-distance of the $P$ points and the $Q$ points. Let $J$ and $I$ be the sizes of the sets $P$, and $Q$ respectively. In case $I\leq J$, each $q_i$ can be viewed as a service center that serves approximately $\nu= \floor*{\frac{J} {I}}$ points from the $p_j$'s.  We use this observation to calculate the fill-distance of $P$, then estimate the support that guarantees at least $\nu$ points in the neighborhood of $p_j$, as well as the practical support size of the Gaussian $w_{i,j}$ (see the illustration in Figure \ref{fig:RadIlust}).

Unlike the standard definition of fill-distance in scattered data function approximation \cite{levin1998approximation}, we introduce
\begin{definition}\label{def:def1}
	The fill-distance of the set $P$ is 
	\begin{eqnarray} \label{h_0} h_0=\maxmath_{y\in M}\minmath_{p_j\in P}\|y-p_j\|.		\,. \end{eqnarray}
\end{definition}

\begin{definition}\label{def:def2}
	Given two point-clouds $P=\{p_j\}_{j=1}^J \subset \mathbb{R}^n$ and $Q=\{q_i\}_{i=1}^I \subset \mathbb{R}^n$, situated near a manifold $\mathcal{M}$ in $\mathbb{R}^n$, such that their sizes obey the constraint $I\leq J$, denote $\nu= \floor*{\frac{J} {I}}$. Then we say that the radius that guarantees approximately $\nu$ points from $P$ in the support of each point $q_i$ is $\hat{h}_0=c_1 h_0$, with $c_1$ given by 
	\begin{eqnarray} \label{c_1} c_1=\text{argmin} \{c: \#(\bar{B}_{ch_0}(q_i) \cap P)\geq \nu,\,  \forall q_i\in Q\}\,.\end{eqnarray}
	where $\#(B_r(x) \cap P)$ is the number of points in a ball $B_r(x)$ of radius $r$ centered at the point $x$.
\end{definition}

\begin{remark}\label{def:def3}
	Let $\sigma$ be the variance of a Gaussian $w(r) = e^{-\frac{r^2}{\sigma^2}}$. For the normal distribution, four standard deviations away from the mean account for $99.99\%$ of the set.
	In our case, by the definition of $w_{i,k}$, since $h$ is the square root of the variance, $4\sigma= 4\frac {h}{\sqrt{2}}= 2 \sqrt{2}h_1$ covers $99.99\%$ of the support size of $w_{i,k}$.
\end{remark}

The following theorem indicates how the parameters $h_1$ and $h_2$  should be selected.

\begin{Theorem}\label{lma0}
	Let $\mathcal{M}$ be a $d$-dimensional manifold in $\mathbb{R}^n$. Suppose given two point-clouds $P=\{p_j\}_{j=1}^J \subset \mathbb{R}^n$ and $Q=\{q_i\}_{i=1}^I \subset \mathbb{R}^n$ situated near a manifold $\mathcal{M}$ in $\mathbb{R}^n$, such that their sizes obey the constraint $I\leq J$, and let $\nu= \floor*{\frac{J} {I}}$. Let $w_{i,j}$ be the locally supported weight function given by $w_{i,j} = \exp\big\{\!-\|q_i-p_j\|^2/{h^2}\big\}$. Then a neighborhood size of $h = 2 \sqrt{2}\hat{h}_0$ guarantees $ 2^{1.5d}\nu$ points in the support of $w_{i,j}$, where $\widehat{h}_0=c_1h_0$, with $c_1$ given by  \textup{\eqref{c_1}}.
\end{Theorem}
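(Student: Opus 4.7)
The plan is to derive the claim from a straightforward $d$-dimensional volume-scaling argument applied to the base count already guaranteed by the construction of $\hat h_0$. From the preceding definition $\hat h_0 = c_1 h_0$, the constant $c_1$ was built precisely so that $\#(\bar B_{\hat h_0}(q_i)\cap P)\ge \nu$ for every $q_i\in Q$, so at radius $\hat h_0$ each center $q_i$ already sees at least $\nu$ points of $P$. This is the base case for a scaling step that will inflate the radius up to $h$.

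Next I would lift this base count to larger radii by invoking the manifold hypothesis. Since $\mathcal{M}$ is a smooth $d$-dimensional submanifold and $P$ is an $h$-$\rho$ set clustered near $\mathcal{M}$, the number of sample points inside a Euclidean ball of radius $r$ centered near $\mathcal{M}$ is controlled by the $d$-dimensional volume of that ball's intersection with the manifold, which behaves as $\omega_d r^d$ up to lower-order curvature corrections for $r$ small relative to the reach. Combined with approximate uniformity of the sample along $\mathcal{M}$, this yields
\[
\#\bigl(\bar B_{k\hat h_0}(q_i)\cap P\bigr)\;\approx\;k^d\,\#\bigl(\bar B_{\hat h_0}(q_i)\cap P\bigr)\;=\;k^d\,\nu,\qquad k\ge 1.
\]

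Applying this proportionality with $k = 2\sqrt 2 = 2^{3/2}$ yields $(2\sqrt 2)^d\,\nu = 2^{1.5\,d}\,\nu$ points of $P$ inside any ball of radius $2\sqrt 2\,\hat h_0$ around $q_i$. Choosing the bandwidth $h = 2\sqrt 2\,\hat h_0$ identifies this ball with the neighborhood of size $h$ that governs the effective support of the Gaussian $w_{i,j}=\exp\{-\|q_i-p_j\|^2/h^2\}$, which delivers the asserted count and proves the theorem.

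The step I expect to be the main obstacle is precisely the transition from the ambient $n$-dependent bound $\#(\bar B_{kh}(y)\cap P)\le \rho k^n$ supplied by the $h$-$\rho$ definition to the intrinsic $d$-dependent scaling $k^d\nu$ used above. Justifying it cleanly requires leaning on the smoothness and bounded reach of $\mathcal{M}$ (so that geodesic and Euclidean balls agree up to lower-order corrections) and on the quasi-uniformity of the sample along $\mathcal{M}$; once this scaling is in hand, the remainder of the argument is purely arithmetic, and the appearance of the constant $2\sqrt 2$ is just the $d$-th root of the target inflation factor $2^{1.5\,d}$.
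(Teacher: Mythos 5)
Your proposal is correct and takes essentially the same route as the paper: a base count of $\nu$ points in $\bar B_{\hat h_0}(q_i)$ from the definition of $c_1$, followed by $d$-dimensional volume scaling of the radius by the factor $2\sqrt2$, giving $(2\sqrt2)^d\nu=2^{1.5d}\nu$. The only difference is cosmetic — the paper simply \emph{assumes} that the proportion of points per volume is radius-independent and ties the radius $2\sqrt2\,\hat h_0$ to the $4\sigma$ effective support of the Gaussian, whereas you make that density assumption explicit via quasi-uniformity near $\mathcal{M}$.
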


\begin{proof}
	
	Given a point $q_i$ we look for the amount of points from $P$ in the support of $w_{i,j}$. Using Remark \ref{def:def3} we can estimate the support size of $w_{i,j}$ as $4\sigma$, where $4\sigma= 2 \sqrt{2}h_1$. We denote the amount of points from $P$ in the support of $q_i$ by $S_{4\sigma}$. In what follows we assume that the proportion of the number of points in a support does not change with radius changes. Thus, $S_{4\sigma}$ can be determined from the ratio of the volume to the amount of served points: $ \frac {V_1}{V_2}= \frac {S_{\sigma}}{S_{4\sigma}}$, where the volume of a ball with radius $\hat{h}_0$ in $\mathbb{R}^d$ is $V_1 = \pi^{d/2} \hat{h}_0^d/c(d)$, and the volume of a ball with radius $4\sigma$ is $V_2=\pi^{d/2} {(4\sigma)}^d/c(d)=2^{1.5d} \pi^{d/2} \hat{h}_0^d/c(d)$ (where $c$ is Euler's gamma function). Thus, $S_{4\sigma}=\nu \frac {V_2}{V_1} = 2^{1.5d}\nu$.
\end{proof}
\vspace{-12mm}
\begin{figure}[H] 
	\centering
	\label{fig:g}\includegraphics[width=\textwidth,height=4.5cm,keepaspectratio]{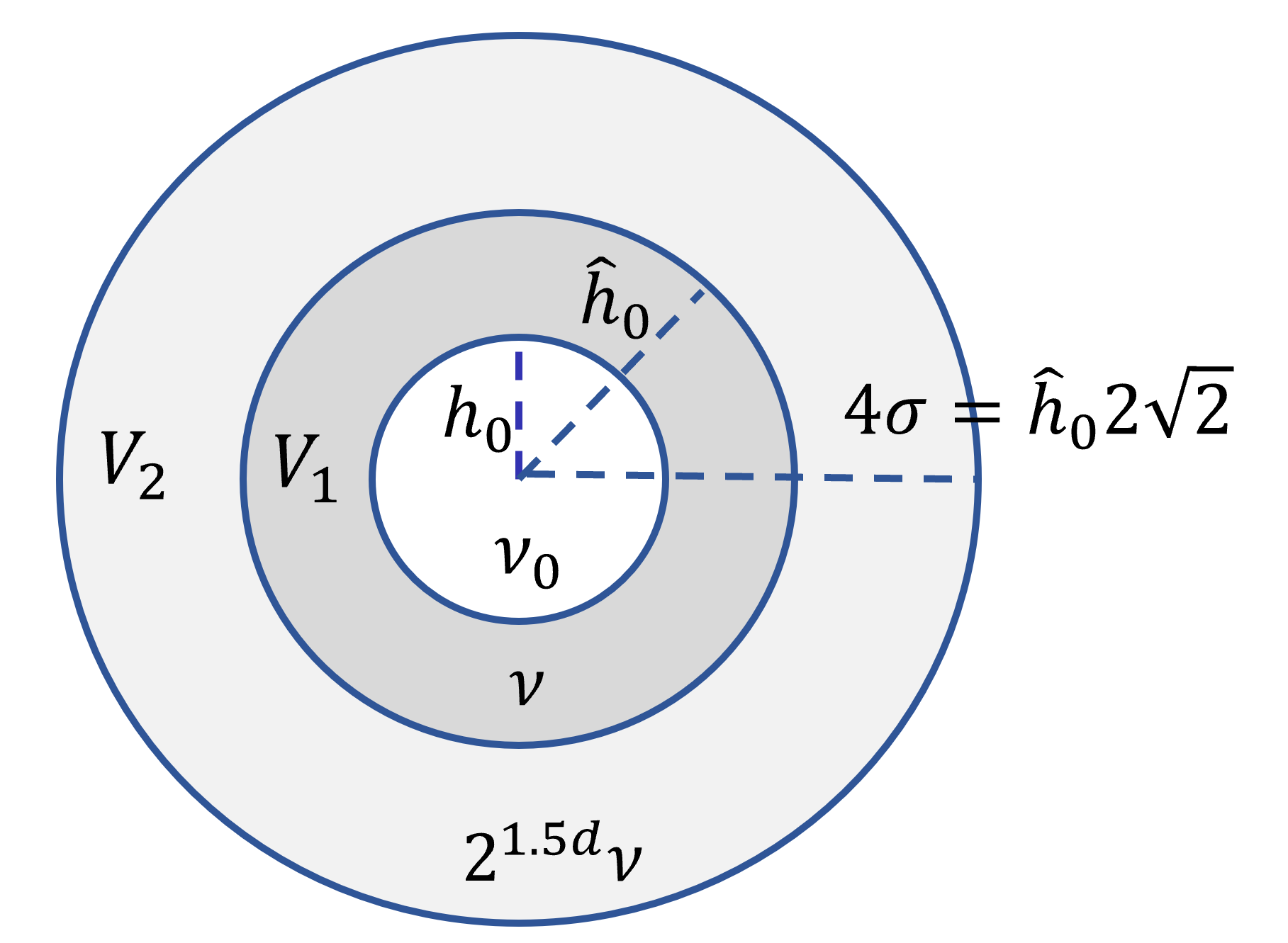}
	\caption{Scheme of the fill-distance and the size of the support of the weight function. $h_0$ is the radius that guarantees at least one point $p_j$ in the support of $q_i$, $\hat{h}_0$ guarantees 
		$\nu$ points, while the real number of points in the support is $2^{1.5d}\nu$.}
	\label{fig:RadIlust}
\end{figure}
\vspace{-5mm}
\begin{corollary}
	Let $P$ and $Q$ be as defined in Theorem \textup{\ref{lma0}}, and assume $J< I$, then the number of $Q$ points in the support of each $p_j \in P$ is $2^{1.5d}\nu$.
\end{corollary}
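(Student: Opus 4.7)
The plan is to observe that the corollary is simply the symmetric counterpart of Theorem \ref{lma0}, obtained by swapping the roles of the two point-clouds $P$ and $Q$. Since the argument in the proof of Theorem \ref{lma0} uses only the generic properties of two point sets sampled near $\mathcal{M}$ together with the volume-ratio comparison in a $d$-dimensional ball, nothing in the argument is intrinsic to the particular roles played by $P$ and $Q$. Thus the proof should essentially consist of verifying that the setup transfers correctly and then re-running the same computation.

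First, I would define the fill-distance of $Q$ in exact analogy with Definition \ref{def:def1}, namely $h_0' = \text{median}_{q_i \in Q}\min_{q_{i'}\in Q\setminus\{q_i\}}\|q_i - q_{i'}\|$. Since the assumption $J < I$ reverses the sampling density relationship, each $p_j \in P$ plays the role of a service center for approximately $\nu = \lfloor I/J \rfloor$ points from $Q$. Then, following Definition \ref{def:def2}, I would introduce $\widehat{h}_0' = c_2 h_0'$ with
\begin{equation*}
c_2 = \operatorname{argmin}\bigl\{c : \#(\bar{B}_{ch_0'}(p_j)\cap Q) \geq \nu,\ \forall p_j \in P\bigr\},
\end{equation*}
so that a ball of radius $\widehat{h}_0'$ around any $p_j$ contains at least $\nu$ points of $Q$.

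Next, exactly as in the proof of Theorem \ref{lma0}, I would invoke Remark \ref{def:def3} to identify the effective support of the Gaussian weight $\widehat{w}_{i,i'} = \exp\{-\|q_i-q_{i'}\|^2/h_2^2\}$ with the radius $4\sigma = 2\sqrt{2}\,h_2$, and choose $h_2 = 2\sqrt{2}\,\widehat{h}_0'$. Under the working assumption that the density of sampled points is locally uniform on $\mathcal{M}$ (so the ratio of counts behaves like the ratio of $d$-dimensional ball volumes), the number of $Q$-points in a ball of radius $4\sigma$ around $p_j$ equals
\begin{equation*}
\nu\,\frac{V_2}{V_1} = \nu\,\frac{\pi^{d/2}(4\sigma)^d/c(d)}{\pi^{d/2}(\widehat{h}_0')^d/c(d)} = 2^{1.5d}\,\nu,
\end{equation*}
which is the claimed count.

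The main subtlety, rather than an obstacle, is purely notational: in the theorem statement $\nu$ was introduced as $\lfloor J/I\rfloor$ under $I \leq J$, while in the corollary with $J < I$ the meaningful quantity is $\lfloor I/J\rfloor$. I would make this reinterpretation explicit at the start of the proof so that the same symbol $\nu$ continues to represent the number of points served by each center. Once this substitution is made, the volume-ratio argument carries over verbatim, and no new machinery is needed beyond what was established for Theorem \ref{lma0}.
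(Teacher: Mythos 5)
Your proposal is correct and follows essentially the same route as the paper: swap the roles of $P$ and $Q$, reinterpret $\nu$ as $\lfloor I/J\rfloor$, and re-run the volume-ratio argument of Theorem \ref{lma0} with the fill-distance and support radius now measured relative to $Q$ around each $p_j$. Your version merely spells out explicitly (e.g., the analogue $c_2$ of $c_1$ and the Gaussian support $4\sigma = 2\sqrt{2}\,h_2$) what the paper states more tersely as ``all the preceding definitions remain valid with the roles of $P$ and $Q$ switched.''
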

\begin{proof}
	Each $p_j$ can be viewed as a service center that serves approximately $\nu=I/J$ points $q_i$ from $Q$.  All the preceding definitions remain valid, except that the roles of $P$ and $Q$ are switched. Namely, $h_0$ is the fill-distance of the set $P$ within the set $Q$, $\hat{h}_0$ guarantees $\nu$ points from $Q$ near each point from $P$, and the actual number of $Q$ points in the support of $P$ is $2^{1.5d}\nu$.
\end{proof}

\begin{remark}
	\label{practical_hs}
	\textbf{Practical considerations for the support size calculations.} As mentioned above, given a point $q_i$ we estimate two different support sizes $h_1$ and $h_2$ with respect to the sets $P$ and $Q$ to be used in \ref{eq:1}. Assume $I < J$, then $h_1$ is set to be $\hat{h}_0$, which is calculated using definition \ref{def:def2}. Since we don't have any knowledge about the uniformity of distribution of the $Q$ points over $\mathcal{M}$, we estimate $h_2$ as follows. We sample $I$ points uniformly from $P$, and denote this set by $Q^{rand}$. Next, we estimate $h_2$ as $\hat{h}_0$ using definition \ref{def:def2}, when substituting both of the sets $P$ and $Q$ to be $Q^{rand}$. This gives a rough estimation of $h_2$ in the scenario when the $Q$ points are equality distributed over $\mathcal{M}$.
	
\end{remark}

\begin{remark} \label{rem:reach_def}
	The \textup{reach} $\tau_M$ of $\mathcal{M} \subset \R^n$ is defined as the largest number such that any point at distance less than $\tau_M$ from $\mathcal{M}$ has a unique nearest point on $\mathcal{M}$ \cite{federer1959curvature}. We note that \textup{$h$} should be smaller than the reach $\tau_M$ of the manifold $\mathcal{M}$. The reason for this is to prevent a situation where the weighted summations used in the cost function \textup{(\ref{eq:1})} may be influenced by points in another branch of $\mathcal{M}$ if this constraint is violated. 
\end{remark}

\section{Main Results}
\label{sec:TheoAnalysis}

Although LOP became popular for surface reconstruction, very important theoretical aspects of the methodology didn't gain attention. The main goal of the analysis presented in this section is to complete the missing parts of the puzzle for the high-dimensional case. We will prove the convergence of the MLOP method, order of approximation, convergence rate as well as its complexity (presented in Theorem \ref{thm:conv}, Theorem \ref{thm:approx_order} and Theorem \ref{thm:convRate}, respectively). In addition, we will discuss the uniqueness of the MLOP solution (see Subsection \ref{sec:Uniqueness}).

\subsection{Convergence to a Stationary Point}

We are now ready to state our main convergence theorem. The fact that the cost function is non-convex poses a challenge for the proof of the convergence of the proposed method. First, we define $h$ as described in Section \ref{sec:optH} and assume that the $h$-$\rho$ condition, defined above, is satisfied. Next, we utilize the following general non-convex convergence theorem presented in \cite{lee2016gradient} to prove the convergence of our method.

\begin{Theorem}\label{thm:convLeeEtAl}
	Let $f:\mathbb{R}^d \to \mathbb{R}$, not necessarily convex, be twice continuously differentiable and has Lipschitz gradient, with constant L, i.e., $\|\nabla f(x)-\nabla f(y) \| \leq L \|x-y\|$. Let its the gradient descent of $f$ be $x^k=x^{k-1}- \alpha \nabla f(x^{k-1})$, with bounded step size $0<\alpha<1/L$. Suppose, all saddle points of the function $f$ are strict-saddle (i.e., for all critical points $x^*$ of $f$, $\lambda_{\minmath} {\nabla^2 (f(x^* ))}<0$). Then the gradient descent with random initialization and sufficiently small constant step size converges almost surely to a local minimizer or to minus infinity. i.e., if $x^*$ is a strict saddle then $\Prmath(\limmath \text{ } x_k=x^*) = 0$.
	
\end{Theorem}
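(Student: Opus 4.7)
The plan is to view the gradient descent recursion as iteration of the smooth map $g(x) := x - \alpha \nabla f(x)$ and to apply the Center-Stable Manifold Theorem from smooth dynamics to each strict saddle. The key point is that any trajectory that converges to a critical point $x^\ast$ must, for large enough $k$, lie inside the center-stable manifold of $x^\ast$ under $g$; at a strict saddle this manifold has positive codimension, so its preimages under the backward dynamics form a set of Lebesgue measure zero. Random initialization on an open set then assigns zero probability to convergence to any strict saddle, leaving almost surely a local minimizer (or divergence) as the only option.

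First I would verify the basic geometric hypothesis that $g$ is a diffeomorphism of $\mathbb{R}^d$ onto its image when $0 < \alpha < 1/L$. Since $\|\nabla f(x) - \nabla f(y)\| \le L\|x-y\|$, the Jacobian $Dg(x) = I - \alpha \nabla^2 f(x)$ has eigenvalues in $(1 - \alpha L,\, 1 + \alpha L)$, which excludes $0$, so $Dg$ is invertible everywhere; a standard monotonicity/contraction argument on $I - g$ then gives injectivity, hence $g$ is a diffeomorphism onto its image. Next, at a strict saddle $x^\ast$, because $\lambda_{\min}(\nabla^2 f(x^\ast)) < 0$, the linearization $Dg(x^\ast)$ has at least one eigenvalue $1 - \alpha \lambda_{\min} > 1$, i.e.\ $x^\ast$ is an unstable fixed point of $g$ with a nontrivial unstable direction.

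The main step is the measure-zero argument. The Center-Stable Manifold Theorem provides, for each strict saddle $x^\ast$, a neighborhood $U_{x^\ast}$ and a $C^1$ embedded submanifold $W^{cs}_{\rm loc}(x^\ast) \subset U_{x^\ast}$ of dimension strictly less than $d$ such that every trajectory of $g$ staying in $U_{x^\ast}$ and converging to $x^\ast$ must lie on $W^{cs}_{\rm loc}(x^\ast)$; in particular this local manifold has Lebesgue measure zero. The global set of initial points converging to $x^\ast$ is $\bigcup_{k \ge 0} g^{-k}(W^{cs}_{\rm loc}(x^\ast))$, which, since $g$ is a diffeomorphism (and hence sends measure-zero sets to measure-zero sets under $g^{-1}$), is a countable union of measure-zero sets, hence measure zero. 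To assemble this into a global statement I would take a countable cover of $\mathbb{R}^d$ by compact balls; because strict saddles are isolated within each compact set (the nonzero eigenvalue condition, combined with continuity of $\nabla^2 f$, prevents accumulation), only countably many strict saddles exist, and their basins of attraction under $g$ therefore form a countable union of measure-zero sets.

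Finally, because $g$ is the gradient descent map of a $C^2$ function with $\alpha < 1/L$, a standard monotone-descent argument shows $f(x_k)$ is nonincreasing and, along any bounded trajectory, $\|\nabla f(x_k)\| \to 0$, so the only possible limit points are critical points (or the iterates escape to infinity, giving $f \to -\infty$ under mild coercivity assumptions). Combining this with the measure-zero conclusion for each strict saddle, a random initialization drawn from any distribution absolutely continuous with respect to Lebesgue measure yields $\Pr(\lim_{k \to \infty} x_k = x^\ast) = 0$ for every strict saddle $x^\ast$, proving the claim. The hardest part of the argument is the dynamical-systems step invoking the Center-Stable Manifold Theorem together with the countable-cover bookkeeping to pass from the local, near-saddle picture to a global measure-zero statement; the Lipschitz-gradient and step-size conditions are exactly what is needed to make $g$ a diffeomorphism so that this transfer is legitimate.
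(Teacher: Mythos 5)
The paper contains no proof of this statement: it is imported verbatim as a known result of Lee et al.\ \cite{lee2016gradient}, and the paper only verifies its hypotheses later, in Theorem~\ref{thm:conv}. So there is nothing in the paper to compare against; what you have done is reconstruct the standard argument of the cited reference, and in its main lines the reconstruction is correct. The map $g(x)=x-\alpha\nabla f(x)$ is a diffeomorphism onto its image when $0<\alpha<1/L$ (eigenvalues of $Dg$ lie in $(1-\alpha L,\,1+\alpha L)$, and $g(x)=g(y)$ forces $\|x-y\|\le\alpha L\|x-y\|$); a strict saddle gives $Dg(x^*)$ the expanding eigenvalue $1-\alpha\lambda_{\min}>1$; the center-stable manifold theorem makes the local set of initializations converging to $x^*$ lie in a submanifold of positive codimension; and the basin of $x^*$ is contained in $\bigcup_{k\ge0} g^{-k}\bigl(W^{\mathrm{cs}}_{\mathrm{loc}}(x^*)\bigr)$, a countable union of null sets since $g^{-1}$ maps null sets to null sets. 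An absolutely continuous initialization then gives $\Pr(\lim x_k=x^*)=0$.

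The one genuinely wrong step is the claim that strict saddles are isolated, hence countable. Strict-saddleness requires only one negative Hessian eigenvalue; the remaining eigenvalues may vanish, so strict saddles can form continua --- e.g.\ $f(x,y)=-x^{2}$ has the entire line $\{x=0\}$ as strict saddles, and no accumulation is prevented. For the conclusion as formalized in the statement (probability zero of converging to a \emph{fixed} strict saddle $x^*$) this step is simply unnecessary and should be deleted. For the global claim that the iterates almost surely avoid the whole set of strict saddles, the correct repair is not to count the saddles but to cover the (possibly uncountable) saddle set by countably many of the neighborhoods furnished by the center-stable manifold theorem via Lindel\"of's lemma, and take the union of the corresponding null local stable sets; this is exactly how the follow-up work of Lee et al.\ removes the isolation assumption. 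Finally, note that ``converges to a local minimizer or to minus infinity'' needs more than $\nabla f(x_k)\to0$ along bounded-below trajectories: convergence of $x_k$ to a single critical point requires an extra hypothesis (isolated critical points, coercivity, or a \L{}ojasiewicz-type condition). The quoted statement is itself informal on this point, and your ``mild coercivity assumptions'' caveat is the honest way to flag it rather than a defect of your argument.
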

We also recall the following theorem on eigenvalue bounds, due to Iyengar et al. \cite{iyengar2000estimating}.
\begin{Theorem}\label{thm:eigValueBounds}
	The highest and lowest eigenvalues of a self-adjoint matrix $X$, with entries $x_{i,j}$, lie in the range
\vspace{-12mm}

	\begin{eqnarray*} \lambda_{\minmath}, \lambda_{\maxmath} \in [l, u]\,,\end{eqnarray*} 
\vspace{-6mm}
	where \begin{eqnarray*} l = \underset{i \in I}{\minmath} \left( x_{i,i} - \sum \limits_{j} |x_{i,j}| \right) and   \quad   
		u = \underset{i \in I}{\maxmath} \left (x_{i,i} + \sum \limits_{j} |x_{i, j}| \right) \,.\end{eqnarray*} 
\end{Theorem}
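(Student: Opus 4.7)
The plan is to apply a Gershgorin-style argument adapted to the self-adjoint setting, which forces all eigenvalues to be real so we can work with closed intervals on $\mathbb{R}$ rather than complex discs. Self-adjointness is what justifies comparing $\lambda$ to real numbers $l$ and $u$ in the first place, so I would record that fact at the outset.

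First I would let $\lambda$ be an arbitrary (real) eigenvalue of $X$ with a nonzero eigenvector $v=(v_1,\dots,v_N)$. I would pick an index $k$ for which $|v_k|=\max_{i}|v_i|$ and rescale so that $v_k=1$ and $|v_j|\le 1$ for every $j$. Reading off the $k$-th coordinate of the identity $Xv=\lambda v$ gives
\begin{equation*}
\lambda = \sum_{j} x_{k,j}\, v_j = x_{k,k} + \sum_{j\ne k} x_{k,j}\, v_j.
\end{equation*}
Moving $x_{k,k}$ to the left and applying the triangle inequality, together with the bound $|v_j|\le 1$, yields
\begin{equation*}
|\lambda - x_{k,k}| \;\le\; \sum_{j\ne k} |x_{k,j}|\,|v_j| \;\le\; \sum_{j\ne k} |x_{k,j}| \;\le\; \sum_{j} |x_{k,j}|,
\end{equation*}
where the last inequality is a free upgrade since adding the nonnegative term $|x_{k,k}|$ only widens the bound.

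From the previous display I would conclude that $\lambda$ lies in the interval $\bigl[x_{k,k}-\sum_j|x_{k,j}|,\; x_{k,k}+\sum_j|x_{k,j}|\bigr]$. Taking the $\min$ of the left endpoints and the $\max$ of the right endpoints over all row-indices $i\in I$, one obtains the containment in $[l,u]$ as defined in the statement; this step uses only the definitions of $l$ and $u$. Since $\lambda$ was arbitrary, this applies in particular to $\lambda_{\min}$ and $\lambda_{\max}$, finishing the argument.

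There is essentially no serious obstacle here: the result is a textbook consequence of Gershgorin's theorem, and the only slightly non-standard feature is that the statement uses the full row sum $\sum_j|x_{i,j}|$ instead of the usual deleted sum $\sum_{j\ne i}|x_{i,j}|$. That makes the intervals weaker rather than stronger, so the bound follows a fortiori. The one modelling point I would flag is that if we wanted the tightest possible endpoints we should replace $\sum_j$ by $\sum_{j\ne i}$, but since the theorem as stated is what the subsequent convergence analysis invokes, proving the stated (looser) version suffices.
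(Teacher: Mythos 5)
Your argument is correct: the Gershgorin-style estimate, applied after noting that self-adjointness makes all eigenvalues real, does yield $\lambda \in \bigl[x_{k,k}-\sum_j|x_{k,j}|,\ x_{k,k}+\sum_j|x_{k,j}|\bigr]$ for some row index $k$, and taking the minimum of left endpoints and maximum of right endpoints over $i$ gives the stated interval $[l,u]$; your observation that the full row sum only loosens the usual deleted-sum bound is also right. Note, however, that the paper itself offers no proof to compare against --- Theorem~\ref{thm:eigValueBounds} is simply recalled from Iyengar et al.\ \cite{iyengar2000estimating} --- so your contribution is a self-contained derivation of a cited result, and the standard Gershgorin route you chose is the natural (and essentially canonical) way to obtain it.
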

\begin{Theorem}[Convergence to a stationary point]\label{thm:conv}
	Let $\mathcal{M}$ be a $d$-dimensional manifold in $\mathbb{R}^n$, where $d$ is an unknown intrinsic dimension. Suppose that the scattered data points $P = \{{p_j }\}_{j =1}^J$ were sampled near the manifold $\mathcal{M}$, $h_1$ and $h_2$ are set as defined in Section \textup{\ref{sec:optH}}, and the $h$-$\rho$ set condition is satisfied with respect to $\mathcal{M}$. Let the points $Q^{(0)}=\{q_i^{(0)} \}_{i=1}^I$ be sampled from $P$. Then the gradient descent iterations \textup{\eqref{eq:1}} converge almost surely to a local minimizer $Q^*$.
\end{Theorem}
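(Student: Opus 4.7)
The strategy is to verify the four hypotheses of Theorem~\ref{thm:convLeeEtAl} for the cost function $G$ defined in \eqref{eq:1}: (i) $G$ is twice continuously differentiable on the effective domain; (ii) $\nabla G$ is globally Lipschitz with some constant $L$; (iii) the step size $\gamma_k$ can be kept below $1/L$; and (iv) every saddle point of $G$ is strict, meaning $\nabla^2 G$ has a strictly negative eigenvalue there. Once these are checked, the conclusion is immediate, and the ``random initialization'' requirement is met by our choice of $Q^{(0)}$ as a random subsample of $P$.

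For (i), observe that $\|v\|_{H_\epsilon}=\sqrt{v^2+\epsilon}$ is $C^\infty$ because $\epsilon>0$, the Gaussian weights $w_{i,j}$ and $\widehat w_{i,i'}$ are $C^\infty$, and the repulsion kernel $\eta(r)=1/(3r^3)$ is $C^\infty$ away from $r=0$. The singularity $\eta(0)=+\infty$ acts as a barrier that forces the $q_i$ to remain pairwise distinct along the iterates, so $G$ is $C^2$ on the effective domain. For (ii), the plan is to compute $\nabla^2 G$ by differentiating the explicit gradient formula \eqref{eq:GradG}, then bound its operator norm via Theorem~\ref{thm:eigValueBounds}: each entry is a finite sum of smooth, bounded quantities (the $H_\epsilon$ second derivatives are controlled because $\epsilon>0$, the Gaussian derivatives are bounded on all of $\R^n$, and the $h$-$\rho$ condition of Definition~\ref{def:def4} caps the number of $p_j$ or $q_{i'}$ contributing inside any ball of fixed multiple of $h_1$ or $h_2$). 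Applying Theorem~\ref{thm:eigValueBounds} row by row then delivers a finite Lipschitz constant $L$. This is where the $h$-$\rho$ hypothesis really earns its keep, since otherwise the sums could in principle blow up. For (iii), the Barzilai--Borwein rule already locks $\gamma_k$ into a range that tracks local curvature; capping $\gamma_k \mapsto \min(\gamma_k,1/L)$ whenever needed enforces the step-size bound required by Theorem~\ref{thm:convLeeEtAl} without materially altering the long-run dynamics.

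Condition (iv), the strict-saddle property, is the main obstacle: we must rule out critical points at which $\nabla^2 G$ is negative semidefinite with a zero eigenvalue along some unstable direction. A pragmatic route is a generic argument, perturbing $G \mapsto G+\xi\|Q\|^2$ with $\xi$ drawn from an absolutely continuous distribution; by Morse--Sard this makes all critical points non-degenerate almost surely and the perturbation is harmless because it preserves the attraction/repulsion geometry. A sharper, non-perturbative argument would analyse the opposing curvatures of the attractive term $E_1$ and the repulsive term $E_2$ and show that at any non-minimum critical point the repulsion term contributes a strictly negative eigenvalue of $\nabla^2 G$ along the direction in which repulsion locally dominates attraction; the explicit expression of $\eta$ and the barrier-type growth of $\widehat w_{i,i'}\eta$ at short range make this tractable. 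Combining (i)--(iv), Theorem~\ref{thm:convLeeEtAl} yields that the iterates of \eqref{eq:1} converge almost surely to a local minimizer $Q^*$, as claimed.
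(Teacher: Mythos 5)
Your overall skeleton is the same as the paper's: verify the hypotheses of Theorem~\ref{thm:convLeeEtAl} for $G$, obtain the Lipschitz constant by bounding the Hessian entries (using the boundedness of $\|\cdot\|_{H_\epsilon}$ derivatives for $\epsilon>0$, the Gaussian decay, and the $h$-$\rho$/support-size counts) and applying the eigenvalue bounds of Theorem~\ref{thm:eigValueBounds}. Up to and including step (ii) your plan is essentially the paper's proof.

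The genuine gap is step (iv), the strict-saddle condition, which you yourself identify as the main obstacle but then only sketch. Your first route (perturb $G\mapsto G+\xi\|Q\|^2$ and invoke Morse--Sard) does not prove the stated theorem: it changes the objective and hence its critical points, so at best you would obtain almost-sure convergence of gradient descent on a \emph{different} function; moreover the probabilistic statement in Theorem~\ref{thm:convLeeEtAl} is over the random initialization, not over a randomized objective, and removing the perturbation afterwards would require a uniformity argument you do not supply. Your second route (show that repulsion forces a strictly negative Hessian eigenvalue at any non-minimum critical point) is exactly where the real work lies, and it is left as a statement of intent. The paper closes this step concretely: it writes out the Hessian entrywise, observes that $a(r)<0$, $b(\hat r)>0$ and that the balancing factors satisfy $\lambda_i<0$ by \eqref{eq:2}, and then applies Theorem~\ref{thm:eigValueBounds} together with the support counts $\Phi_{q_i^{(k)}},\Psi_{q_i^{(k)}}$ to show that both interval endpoints $u$ and $l$ in \eqref{eq:l} are strictly negative; hence $\lambda_{\minmath}(\nabla^2 G)<0$ at every point, in particular at every critical point, which is precisely the strict-saddle requirement. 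The same bound then yields the Lipschitz constant $L=\maxmath\{u^2,l^2\}=l^2$, so the eigenvalue computation does double duty. Without an argument of this kind (or a completed version of your curvature analysis), your proposal does not establish condition (iv), and the conclusion of Theorem~\ref{thm:convLeeEtAl} cannot be invoked.
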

\begin{proof} 
	We proceed by verifying that the conditions of Theorem~\ref{thm:convLeeEtAl} hold. At a high level, our proof consists of the following steps: 
	\begin{enumerate}[noitemsep]
		\item Calculate the Hessian of the cost function \eqref{eq:1}.
		\item Bound the eigenvalues of the Hessian.
		\item Show that the minimal eigenvalue is negative.
		\item Bound the norm of the Hessian.
	\end{enumerate}
	We rephrase the minimization problem from \eqref{eq:1} by writing $E_1$ and $E_2$ in a matrix form as
	\begin{eqnarray*}E_1=\begin{pmatrix}1\\ \vdots \\1\end{pmatrix}^t 
		\begin{pmatrix}\|q_1-p_1\| w_{1,1}& \hdots & \|q_1-p_J\| w_{1,J}
			\\ \vdots &\vdots & \vdots 
			\\ \|q_I-p_1\| w_{I,1}& \hdots & \|q_I-p_J\| w_{I,J}\end{pmatrix} \begin{pmatrix}1\\ \vdots \\1\end{pmatrix}\,, \end{eqnarray*}
	
	\begin{eqnarray*}E_2=\begin{pmatrix} \lambda_1\\ \vdots \\\lambda_I\end{pmatrix}^t 
		\begin{pmatrix}0 & \eta (\|q_1-q_2\|) w_{1,2} & \hdots & \eta(\|q_1-q_I\|) w_{1,I}
			\\ \eta (\|q_2-q_1\|) w_{2,1} & 0 &  \hdots & \eta(\|q_2-q_I\|) w_{2,I}
			\\ \vdots &\vdots & \vdots  & \vdots
			\\ \eta(\|q_I-q_1\|) w_{I,1}& \hdots & \eta(\|q_I-q_{I-1}\|) w_{I,I-1} & 0\end{pmatrix} \begin{pmatrix}1\\ \vdots \\1\end{pmatrix}\,. \end{eqnarray*} 
	The cost function is rewritten as
	\begin{eqnarray*}G(Q)=\vec{1}^t \Phi \vec{1}+ \vec{\Lambda}^t \Psi\vec{1} \,, \end{eqnarray*}
	where $\phi_{i,j} = \|q_i-p_j\| w_{i,j}$ are the entries of $\Phi$, $\psi_{i,j} = \eta (\|q_i-q_{i'}\|) \hat w_{i,i'}$ are the entries of $\Psi$, and the vector of balancing parameters $\vec{\Lambda} = (\lambda_1, \hdots, \lambda_I)$ is defined in \eqref{eq:2}.
	
	\forceindent The proof relies on the fact that the weights $w_{i,j}$ are defined by rapidly decreasing functions with respect to a point $q_i \in Q$. Although the weight function $w_{i,j}$ in definition \eqref{eq:1} does not have compact support, for practical reasons it can be assumed that the Gaussian with $4\sigma$ covers $99\%$ of the support size. As a result, the matrices $\Phi$ and $\Psi$ are sparse, and the number of their non-zero entries depend on the support size of $w_{i,j}$. Following Definition \ref{def:def3}, we estimate the number of non-zero entries in each row of the matrices $\Phi$ and $\Psi$, in the $k$th iteration of our algorithm, as
	\begin{align*}&	\Phi_{q_i^{(k)}} = \#\{B_{h}(q_i^{(k)})  \cap P\}\,, \\
	&\Phi_{p_j^{(k)}} = \#\{B_{h}(p_j^{(k)})  \cap Q^{(k)}\}\,, \\
	&\Psi_{q_i^{(k)}} = \#\{B_{h}(q_i^{(k)})  \cap Q^{(k)}\}\,,
	\end{align*}
	where $B_{h}(x)$ is a ball centered at $x$ with radius $h$.
	
	Using these definitions, we calculate the Hessian and its eigenvalues for our cost function in \eqref{eq:1},
	\begin{eqnarray*}H=\nabla^2G(Q) = \nabla^2E_1 + \Lambda \nabla^2E_2\,.\end{eqnarray*} 
	For simplicity, we denote $r_{i,j} = q_i-p_j$; then with $w_{i,j} = \exp\big\{\!-\|q_i-p_j\|^2/{h_1^2}\big\}$,  $\frac{\partial E_1}{\partial q_i}$ can be rewritten as 
	\begin{eqnarray*}	
		\frac{\partial E_1}{\partial q_i} = \sum\limits_{j=1}^J {\frac{r}{\|r_{i,j}\|} \left(1-\frac{2}{h_1^2}\|r_{i,j}\|^2\right) w_{i,j}} \,.
	\end{eqnarray*}
	We notice that, by definition, $\frac{\partial^2 E_1}{{\partial q_i} {\partial q_i'}} = 0\,,$
	and by the chain rule we have 
	\begin{eqnarray*}	\frac{\partial^2 E_1}{\partial q_i^2} =\sum\limits_{j=1}^J {a(r_{i,j}) w_{i,j}}\,,\end{eqnarray*} 
	where $a(r) = -\frac{2}{h_1^2}\|r\| \left(1+\frac{2}{h_1^2}\|r\|^2 \right) <0$. \\
	For the second term in expression (\ref{eq:1}), we denote  $\hat{r}_{i,i'} = q_i-q_i'$, and recall that $\eta(r) = \frac {1} {r^3}$.
	Then the first derivative of $E_2$ is 
	\begin{eqnarray*}	\frac{\partial E_2}{\partial q_i} = \sum\limits_{i'=1}^I { \left(-\frac{\hat{r}_{i,i'}}{\|\hat{r}_{i,i'}\|^5} - \frac{2\hat{r}_{i,i'}}{3h_2^2\|\hat{r}_{i,i'}\|^3}\right) \hat w_{i,i'}}\,. \end{eqnarray*}
	The second derivatives can be expressed as 
	\begin{eqnarray*}	\frac{\partial^2 E_2}{{\partial q_i} {\partial q_i'}} =-b(\hat{r}_{i,i'}) \hat w_{i,i'} \,,
	\end{eqnarray*}
	where $b(\hat{r})= \frac{4}{\|\hat{r}\|^5}+ \frac{3\frac{1}{3}}{h_2^2\|\hat{r}\|^3} + \frac{4}{3h_2^4\|\hat{r}\|} >0$, 
	and 
	\begin{eqnarray*}	\frac{\partial^2 E_2}{\partial q_i^2} =\sum\limits_{i'=1}^J b(\hat{r}_{i,i'})\hat w_{i,i'} \,.
	\end{eqnarray*}
	Thus, 
	\fontsize{9}{8}
	\begin{eqnarray*}	H=
		\begin{pmatrix} \sum\limits_{j=1}^J {a(r_{1,j}) w_{1,j}}+\lambda_1 \sum\limits_{i'=1}^I {b(\hat{r}_{1,i}) w_{1,i'} }; 
			
			& -\lambda_1 b(\hat{r}_{1,2]}) w_{1,2};  & \hdots & -\lambda_1 b(\hat{r}_{1,I})w_{1,I}
			
			\\ \vdots &\vdots & \vdots  & \vdots \\
			
			-\lambda_I b(\hat{r}_{I,1}) w_{I,1};& \hdots & -\lambda_I b(\hat{r}_{I,I-1} ) w_{I,I-1}; & 
			\sum\limits_{j=1}^J {a(r_{I,j}) w_{I,j} }+\lambda_I \sum\limits_{i'=1}^I {b(\hat{r}_{I,i'})\hat w_{i,i'}} \end{pmatrix} 
		\,. \end{eqnarray*} 
	\normalsize
	
	Let us check that the eigenvalues $\lambda_{\minmath}$, and $\lambda_{\maxmath}$ of the MLOP Hessian $H \in \mathbb{R}^{I \times I}$ are bounded and negative.
	By Theorem~\ref{thm:eigValueBounds}, the eigenvalues of $H$ belong to the range $\lambda_{\minmath}, \lambda_{\maxmath} \in [l, u]$, 
	where in our case
	\begin{eqnarray*} l=\underset{i \in I}{\minmath} \left( \sum\limits_{j=1}^J {a(r_{i,j}) w_{i,j}}+\lambda_i \sum\limits_{i'=1}^I {b(\hat{r}_{i,i}) \hat w_{i,i'} }   - \sum\limits_{i'=1}^I |{\lambda_i b(\hat{r}_{i,i']}) \hat w_{i,i'}}| \right )\,.\end{eqnarray*} 
	Let $h = min(h_1, h_2)$. Using the expressions for $a(r)$ and $b(r)$, and the fact that from Definition \ref{def:def3} $\|r\|=4\sigma=\frac{4h}{\sqrt{2}}$, it can be verified that $0 < \minmath (b(r))  \leq \frac{c_1}{h^5}$, $\minmath (a(r))  \leq \frac{-c_2}{h\sqrt 2}$, $\maxmath (a(r)) \leq 0$, where $c_1, c_2$ are constants and $c_1, c_2 >0$.
	Thus, since $\lambda_i <0$ from \eqref{eq:2}, and the number of points from $P$ and $Q$ in the support of $q_i$ estimated by $\Phi_{q_i^{(k)}}$ and $\Psi_{q_i^{(k)}}$, respectively, we have
	\begin{equation} u  \leq - \frac{c_2}{h \sqrt 2} \underset{i \in I}{\maxmath} (\Phi_{q_i^{(k)}}) <0 \,, \end{equation}
	\begin{equation}  \label{eq:l} l  \leq  - \frac{c_2}{h\sqrt 2} \underset{i \in I}{\maxmath} (\Phi_{q_i^{(k)}}) - \frac{2c_1}{h^5} \underset{i \in I}{\maxmath} (|\lambda_i|)  \underset{i \in I}{\maxmath}(\Psi_{q_i^{(k)}}-1) <0 \,. \end{equation}
	Since the eigenvalues are negative, all saddle points of the MLOP target function are strict-saddle, and the second condition of Theorem~\ref{thm:convLeeEtAl} holds. Let us also check that the first condition in Theorem~\ref{thm:convLeeEtAl} is satisfied, i.e., that the norm of the Hessian is bounded: $\| H \| \leq L$, and find $L$.
	Indeed, 
	\begin{eqnarray*}\|H\|_2=\lambda_{\maxmath} ({H'H})=\lambda_{\maxmath} (H^2)=\maxmath \{ \lambda ^2 \mid \lambda \text{ is an eigenvalue of } H\} =\maxmath \{ \lambda_{\maxmath}^2,\lambda_{\minmath}^2\} \,, \end{eqnarray*} 
	so the required bound holds with $L= \maxmath \{ \lambda_{\maxmath}^2,\lambda_{\minmath}^2\}  \leq \maxmath \{ u^2, l^2\} = l^2 $. 
	
	\forceindent To summarize, all the conditions of Theorem~\ref{thm:convLeeEtAl} are satisfied. It follows that the gradient descent with random initialization and a sufficiently small constant step size converges almost surely to a local minimizer or minus infinity. 
\end{proof}

\subsection{Order of Approximation}

The support size of the locally supported function defining the weight function $w_{i,j}$ which is tightly related to the fill-distance of available sample data $P$, plays an important role in the order of approximation of the MLOP algorithm. The following theorem guarantees an $O(h^2)$ order of approximation, which is asymptotic as $h \rightarrow 0$. Here, $h = \max (h_1, h_2)$, where $h_1$ and $h_2$ are defined in Remark \ref{practical_hs}.


\begin{Theorem}[Order of approximation]\label{thm:approx_order}
	Let $P=\{p_j\}_{j=1}^J$ be a set of points that are sampled \textup{(}without noise\textup{)} from a $d$–dimensional $C^{2}$ manifold $\mathcal{M}$, and satisfy the $h$-$\rho$ condition. Then for a fixed $\rho$, and a finite support of size $h$ of the weight functions $w_{i,j}$, the set of points $Q$ defined by the \textup{MLOP} algorithm has an order of approximation $O(h^{2})$ to $\mathcal{M}$.
\end{Theorem}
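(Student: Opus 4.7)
My plan is to show that, at a local minimizer $Q^*$ of $G$ guaranteed by Theorem~\ref{thm:conv}, every $q_i^* \in Q^*$ lies within distance $O(h^2)$ of $\mathcal{M}$. The argument is local: I work in a tangent-normal frame at the nearest manifold point of each $q_i^*$, project the first-order optimality condition onto the normal space, and solve the resulting linear system for the normal displacements. The $O(h^2)$ comes fundamentally from the Taylor expansion of the manifold in the normal direction.

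For the localization, fix $q_i^* \in Q^*$ and let $p_i^{\#} := \pi_{\mathcal{M}}(q_i^*)$ be its nearest point on $\mathcal{M}$, well-defined because the support size $h$ is taken below the reach $\tau_{\mathcal{M}}$ (Remark~\ref{rem:reach_def}). Write $q_i^* = p_i^{\#} + \mathbf{v}_i$ with $\mathbf{v}_i \in N_{p_i^{\#}}\mathcal{M}$. Using a $C^2$ local parameterization $\mathbf{u} \mapsto p_i^{\#} + (\mathbf{u}, \phi_i(\mathbf{u}))$ with $\phi_i(0)=0$, $\nabla \phi_i(0)=0$, and $\phi_i(\mathbf{u}) = \tfrac12 B_i(\mathbf{u},\mathbf{u}) + O(\|\mathbf{u}\|^3)$, the $h$-$\rho$ condition and the bounded support of $w_{i,j}$ force each contributing $p_j = p_i^{\#} + (\mathbf{u}_j, \phi_i(\mathbf{u}_j))$ to satisfy $\|\mathbf{u}_j\| \leq Ch$, so $\phi_i(\mathbf{u}_j) = O(h^2)$.

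Next I project $\nabla G(q_i^*) = 0$ from \eqref{eq:GradG} onto $N_{p_i^{\#}}\mathcal{M}$. Using that the normal component of $q_i^* - p_j$ is $\mathbf{v}_i - \phi_i(\mathbf{u}_j)$ and that $\lambda_i < 0$ by \eqref{eq:2}, after discarding $O(h^3)$ remainders I obtain
\begin{equation*}
\bigl(A_i + |\lambda_i| B_i\bigr)\mathbf{v}_i \;=\; \sum_{j} \phi_i(\mathbf{u}_j)\alpha_j^{i} \;+\; |\lambda_i| \sum_{i'\neq i} \beta_{i'}^{i}\, \Pi_i(\mathbf{v}_{i'}) + O(h^3),
\end{equation*}
where $A_i := \sum_j \alpha_j^i$, $B_i := \sum_{i'\neq i} \beta_{i'}^i$, and $\Pi_i$ is orthogonal projection onto $N_{p_i^{\#}}\mathcal{M}$. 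Stacking over $i$, this is a linear system $M \vec{\mathbf{v}} = \vec b + O(h^3)$ with $\|\vec b\|_{\infty} = O(h^2)$, diagonal entries $A_i + |\lambda_i|B_i$, off-diagonal row sums $|\lambda_i| B_i$, and strict excess $A_i$. Provided $A_i$ is uniformly bounded below in $h$, $M^{-1}$ is uniformly bounded in $\infty$-norm, giving $\|\mathbf{v}_i\| = O(h^2)$ and thus $\mathrm{dist}(q_i^*, \mathcal{M}) = O(h^2)$.

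The step I expect to be the main obstacle is the uniform-in-$h$ lower bound $A_i \geq c > 0$ needed to close the diagonal-dominance argument. Unlike in the pure $L_1$ case, the coefficients $\alpha_j^i$ from \eqref{eq:alpha} carry a sign factor $1 - 2\|q_i - p_j\|_{H_\epsilon}^2/h_1^2$ that becomes negative for samples farther than $h_1/\sqrt{2}$ from $q_i$. Individually each $\alpha_j^i$ is bounded thanks to the $H_\epsilon$ regularization, but the positivity of $A_i$ must be extracted from the concentration of $w_{i,j}$ near $q_i^*$ where the sign is positive, combined with the lower bound on the number of close neighbors supplied by the $h$-$\rho$ condition. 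A secondary subtlety is that the local frame is $q_i$-dependent: tangent planes at nearby base points on a $C^2$ manifold differ by a rotation of size $O(h)$, so $\Pi_i(\mathbf{v}_{i'})$ differs from $\mathbf{v}_{i'}$ by terms of order $h\cdot\|\mathbf{v}_{i'}\|$; these can be absorbed into the $O(h^3)$ remainder provided the bootstrap closes with $V := \max_i \|\mathbf{v}_i\| = O(h^2)$.
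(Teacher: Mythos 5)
Your proposal is sound in outline and structurally parallel to the paper's proof, but it runs in different coordinates. The paper never localizes to a tangent--normal frame: it recasts the stationarity condition $\nabla G(Q)=0$ as an ambient-space linear system $AQ=R$ (equation \eqref{eq:5}), with $\lambda_{i'}$ reparameterized through $\tau_{i'}$, shows the right-hand side is a local affine average of the $p_j$'s and hence lies within $O(h^2)$ of $\mathcal{M}$ by the $C^2$ tangent-plane argument, then inverts $A$ using strict diagonal dominance (imposed via $\tau_i\in[0,0.5)$), and finishes with a second averaging step that splits each manifold point $f_i$ into its tangential projection $t_i$ and normal remainder $r_i$ with $\|r_i\|=O(h^2)$. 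Your version instead projects the same first-order condition onto the normal space at the footpoint and solves a diagonally dominant system for the normal displacements $\mathbf{v}_i$; the inversion argument and the $O(h^2)$ source term (Taylor expansion of the $C^2$ graph) play exactly the same roles. What your route buys is that it bounds $\mathrm{dist}(q_i^*,\mathcal{M})$ directly rather than through two successive averaging steps, and it makes explicit the frame-change error of size $h\|\mathbf{v}_{i'}\|$ and the bootstrap needed to absorb it, which the paper glosses over.

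Two remarks on the obstacle you flag. The uniform lower bound $A_i=\sum_j\alpha_j^i\ge c>0$ (more precisely, control of $\sum_j|\alpha_j^i|$ relative to $\sum_j\alpha_j^i$, since your source term carries absolute values) is genuinely the delicate point, because by \eqref{eq:alpha} the factor $1-\tfrac{2}{h_1^2}\|q_i-p_j\|_{H_\epsilon}^2$ changes sign inside the support; but the paper does not resolve it either: it simply treats $\sum_j p_j\,\alpha_j^{i'}/\sum_j\alpha_j^{i'}$ as a local convex combination and restricts $\tau_i\in[0,0.5)$, so your flagged step corresponds to an implicit assumption in the published argument rather than to an idea you are missing relative to it. Separately, in your projected equation the chord between footpoints contributes $\Pi_i(p_{i'}^{\#}-p_i^{\#})=O(h^2)$, not $O(h^3)$, to the repulsion term; this is harmless because it enters multiplied by the same $\beta$-weights that appear on the diagonal, but it should be recorded as an $O(h^2)\,|\lambda_i|B_i$ contribution to the right-hand side rather than absorbed into the $O(h^3)$ remainder.
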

\begin{proof}
	
	We break the proof into the following steps.
	\begin{enumerate}
		
		\item \textbf{The MLOP cost function can be rewritten in matrix form as $AQ=R$}. We look for a solution $Q$ that will minimize the cost function in \eqref{eq:1}, i.e., such that the gradient $\nabla G(Q)=0$. Thus equation \eqref{eq:GradG} can be recast as a system of equations
		\begin{equation} \label{eq:5} (1-\tau_{i'} ) q_{i} + \tau_{i}\sum\limits_{i' \in I\setminus\{i\}} q_{i'}^{(k)}  \frac{\beta_i^{i'}}{\sum\limits_{i' \in I\setminus\{i\}} \beta_i^{i'}} = \sum\limits_{{j=1}}^J p_j \frac{\alpha_j^{i'}}{\sum\limits_{j\in J} \alpha_j^{i'}} \,,\end{equation}
		where we express $\lambda_{i'}$ in the form $\displaystyle \lambda_{i'} = \tau_{i'} \frac {\sum\limits_{j\in J} \alpha_j^{i'}}  {\sum\limits_{i' \in I\setminus\{i\}} \beta_i^{i'}}$.\\
		As a result, the problem can be written in matrix form as $AQ=R$, where both $A$, and $R$ depend on $Q$. 
		In the new notations, we need to show that the points $Q=A^{-1}R$ lie at a distance of $O(h^{2})$ from $\mathcal{M}$.

		\item \textbf{The $R$ term has order of approximation of $O(h^{2})$ to $\mathcal{M}$}. Let $J_k$ be the indices of points from $P$ which lie at the distance $h$ from a given poin $q_{i'}$ (the set is not empty due to the optimal neighborhood selection in Subsection \ref{sec:optH}). Let $t$ be the index of the closest point in $\{p_j\}_{j \in J_k}$ to the projection of $q_{i'}$ on the manifold $\mathcal{M}$ (Figure \ref{fig:Approx_order} left), and $T$ be the tangent space to $\mathcal{M}$ at that point. Then the sum $\sum\limits_{{j=1}}^J p_j \frac{\alpha_j^{i'}}{\sum\limits_{j\in J} \alpha_j^{i'}} $, is a local convex combination of points $p_k$ within a distance $h$
		from $q_{i'}$, and thus it also lies in $T$, which is affine.
		 Since $\mathcal{M}$ is $C^{2}$, $T$ approximates $\mathcal{M}$ in the order of $O(h^{2})$, the right hand side of \eqref{eq:5} can be written as $F+O(h^{2})$, where $F = \{f_{i}\}_{{i}\in I}$ are points on $\mathcal{M}$. 
		Thus, $AQ = F + O(h^{2})$.

		\item \textbf{Then norm of the matrix $A^{-1}$, $\|A^{-1}\|_\infty$ and its entries $(A^{-1})_{l,m}$ are bounded}.
		For $\tau_{i} \in [0, 0.5)$, the matrix $A$ is strictly diagonally dominant and therefore we can bound $\|A^{-1}\|_\infty  \leq c_1(\tau_{i})$, as well as $|(A^{-1})_{l,m}| < c_2(\tau_{i})$ for two points $q_l$ and $q_m$ lying at a distance of at least $h$, where the influence of distant points decays exponentially with distance. We also note that since the rows of A sum up to one, so do the rows of
		$A^{-1}$.

		\item \textbf{The MLOP reconstruction is of order $O(h^{2})$ to the manifold}.
		The MLOP reconstruction can be written as $Q=A^{-1}F+O(h^{2})$, where each element of $(A^{-1}F)_{i'}$ is the affine average of $f_i$ over the manifold, with exponentially decaying weights $w_{i,j}$.
		Let $T$ be the tangent space to the manifold $\mathcal{M}$ at the point $f_{i'}$, and let $t_i$ be the projection of $f_{i}$ on $T$ (Figure \ref{fig:Approx_order} right). If we rewrite $f_{i}$ using its projection as $f_i = t_i + r_i$, it follows that
		$\displaystyle (A^{-1}F)_{i'} = \sum\limits_{i\in I} {A^{-1}_{i',i}(t_i+r_i)} = \sum\limits_{i\in I} {A^{-1}_{i',i}t_i} + \sum\limits_{i\in I} {A^{-1}_{i',i}r_i}$. We would first like to show that $\|\sum\limits_{i\in I} {A^{-1}_{i',i}t_i} - f_{i'}\| = O(h)$, and since $\sum\limits_{i\in I} {A^{-1}_{i',i}t_i}$ is on $T$, and $T$ approximates the manifold with $O(h^2)$, it will follow that $\sum\limits_{i\in I} {A^{-1}_{i',i}t_i}$ is of order $O(h^2)$ distance from $\mathcal{M}$. In addition, we show that $\sum\limits_{i\in I} {A^{-1}_{i',i}(r_i)} = O(h^2)$.
		
		In more details:
		
		\begin{enumerate}
			\item For a given $q_{i'}$, we denote by $I_k$ its $q_i$ neighbors at the distance $\|q_i - q_{i'}\| \in [kh, (k+1)h]$. We use the fact that the sum of the rows of $A^{-1}$ equals one, and rewrite and estimate $\sum\limits_{i\in I} {A^{-1}_{i',i}t_i}$ as 
			\begin{equation}\|\sum\limits_{i\in I} {A^{-1}_{i',i}t_i} - f_{i'}\|=\|\sum\limits_{i\in I} {A^{-1}_{i',i} (t_i - f_{i'})}\| \leq \sum\limits_{i\in I} {c_2(\tau) \|t_i- f_{i'}\|}= O(h) \,.\end{equation}
			For the last step we note that $\|t_i - f_{i'}\| = \|t_i - f_i +f_i - f_{i'}\| \leq \|t_i - f_i\| + \|f_i- f_{i'}\| \leq O(h)  + (k+1)h$, due to the local approximation property and the distance constraint on the point $q_i$.
			Thus, the sum $\sum\limits_{i\in I} {A^{-1}_{i',i}t_i}$ is an affine combination of points $t_i$ on $T$ and therefore lies in $T$ as well (in a distance $\leq O(h)$), therefore it will follow that it is an $O(h^2)$ from the manifold.
			
			\item Next, similar considerations show that $\|r_i\| \leq \|f_i - f_{i'}\|^{2} \leq c_3((k+1)h+ O(h))^2 $. 
		\end{enumerate}
		To conclude, that based on items (a) and (b), the MLOP order of approximation to the manifold is $O(h^{2})$.
	\end{enumerate}
\end{proof} 
\vspace{-10mm}
\begin{figure}[H]
	\centering
	\label{fig:a1}\includegraphics[width=\textwidth,height=4.5cm,keepaspectratio]{./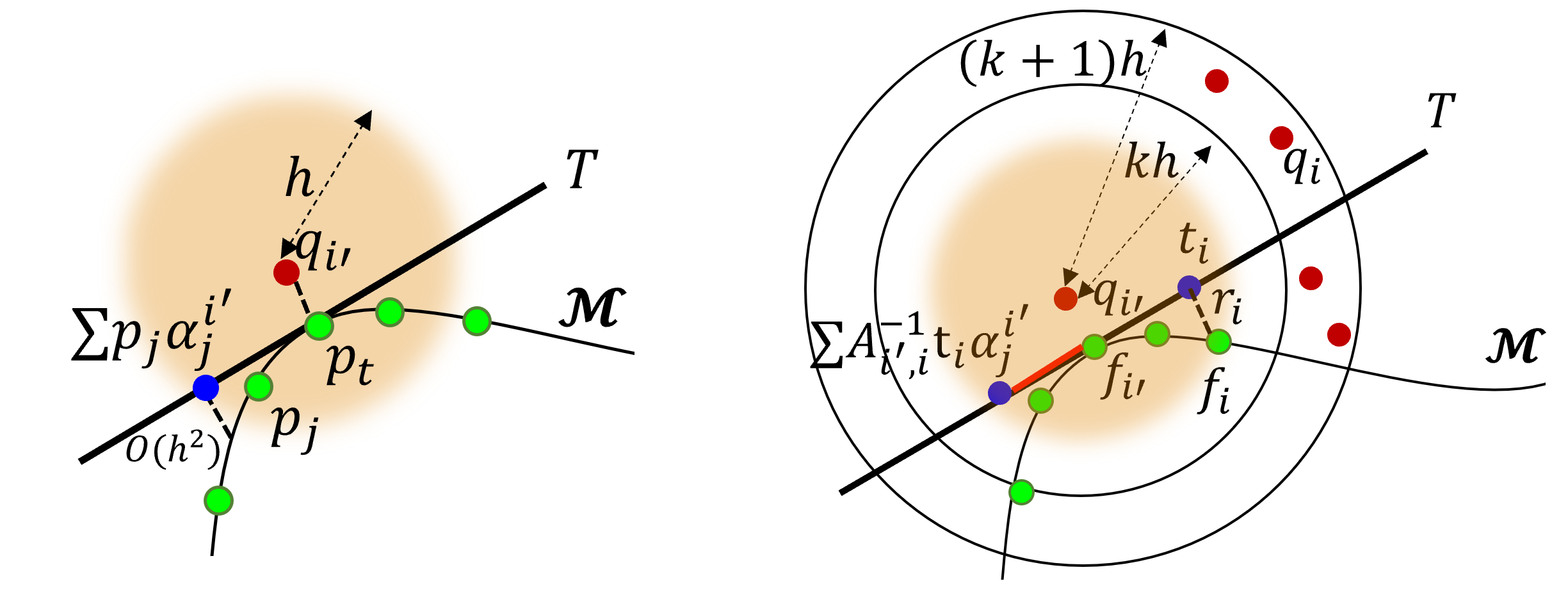}
	\caption{Illustration of the points participating in the estimate of the order of approximation. Left: demonstration why the affine combination of the $p_j$ points, in the neighborhood of $q_{i'}$, is of order $O(h^{2})$. Right: Illustration of the elements used in the estimation of the order of approximation. The $P$ points are marked in green, the $Q$ points in red, while the auxiliary points in the proof are marked in blue.}
	\label{fig:Approx_order}
\end{figure}

\subsection{Rate of Convergence}
\label{sec:convRate}
First, let us consider the gradient-descent rate of convergence of a Lipschitz-continuous strongly convex function. This rate of convergence depends on the condition number of the Hessian of the cost function, and so on the ratio between the smallest and the  largest eigenvalues of the Hessian,  i.e.,  $| 1- c \frac {\lambda_{\minmath}} {\lambda_{\maxmath}} |$, with $0<c<2$. Therefore, if our cost function would be convex, the rate of convergence could be $O(1-c/h^4)$. However, for non-convex optimization, the situation is much more complex. In our setting, where there is no convexity, one can analyze convergence to $\epsilon$-first-order stationary points, as defined below.
\begin{definition}\label{def:def6}
	A differentiable function $f(\cdot)$ is called $L$-smooth if for any $x_1, x_2$
	\begin{eqnarray*}  \|\nabla	 f(x_1) - \nabla f(x_2)\| \leq L\|x_1 - x_2\| \,.\end{eqnarray*}
\end{definition}
\begin{definition}\label{def:def7}
	If $f(\cdot)$ is a differentiable function, we say that $x$ is an $\epsilon$-first-order stationary point if $\|\nabla f(x)\| \leq \epsilon$\,.
\end{definition}
For the rate of convergence of our method, we will use the following theorem proved by Nestrove in \cite{nesterov2018lectures}.
\begin{Theorem}\label{thm:Nestrove} 
	Let $f(\cdot)$ be an $L$- smooth function that is bounded below. Then for any $\epsilon>0$, for the gradient descent with step size $\gamma = \frac{1}{\epsilon}$ and stop criterion  $\|\nabla f(x)\| \leq \epsilon$, the output will be an $\epsilon$-first-order stationary point, which will be reached after $k= \frac{L(f(x_0) – f^*)}{\epsilon^2}$ iterations. In case the starting point is close enough to the local minimum, the convergence is linear.
\end{Theorem}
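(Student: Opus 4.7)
The plan is to anchor the argument on the descent lemma for $L$-smooth functions, $f(y) \leq f(x) + \langle \nabla f(x), y-x \rangle + \tfrac{L}{2}\|y-x\|^2$, which is the only analytic property of $f$ required for the complexity bound. Substituting the gradient descent update $y = x_{k+1} = x_k - \gamma \nabla f(x_k)$ with the step $\gamma = 1/L$ (the natural reading of the step size once one matches dimensions with the $L$-smoothness hypothesis), one obtains the per-step decrease $f(x_{k+1}) \leq f(x_k) - \tfrac{1}{2L}\|\nabla f(x_k)\|^2$, which is the workhorse of the whole proof.

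From here I would telescope this inequality over $k = 0, \ldots, K-1$ and use boundedness from below to get $\sum_{k<K}\|\nabla f(x_k)\|^2 \leq 2L(f(x_0) - f^*)$. Consequently $\min_{k<K}\|\nabla f(x_k)\|^2 \leq 2L(f(x_0)-f^*)/K$, and demanding this quantity to be at most $\epsilon^{2}$ yields the advertised iteration count $K = O\!\bigl(L(f(x_0)-f^*)/\epsilon^{2}\bigr)$. By that point the stopping criterion $\|\nabla f(x_k)\| \leq \epsilon$ must have triggered, and the returned iterate is, by definition, an $\epsilon$-first-order stationary point.

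For the linear convergence statement near a local minimum $x^*$, I would invoke local strong convexity. Since the earlier Hessian analysis (Theorem~\ref{thm:conv}) excluded degenerate saddles, a genuine local minimizer satisfies $\nabla^{2} f(x^*) \succeq \mu I$ for some $\mu > 0$; by continuity of $\nabla^{2} f$, there is a closed Euclidean ball $\mathcal{U}$ around $x^*$ on which $\mu' I \preceq \nabla^{2} f \preceq L\,I$ for some $\mu' \in (0, \mu]$. On $\mathcal{U}$ the function is simultaneously $\mu'$-strongly convex and $L$-smooth, so the textbook co-coercivity argument applied to $x_{k+1}=x_k - \tfrac{1}{L}\nabla f(x_k)$ yields $\|x_{k+1}-x^*\|^{2} \leq (1 - \mu'/L)\,\|x_k - x^*\|^{2}$, which is the claimed linear rate.

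The main obstacle is not analytic but geometric: one must guarantee that once an iterate enters $\mathcal{U}$ the entire subsequent trajectory stays inside it, so that the locally-strongly-convex inequality continues to apply at every step. I plan to handle this by choosing the initial point close enough to $x^*$ that $\|x_0 - x^*\| < r$, where $r$ is the radius of $\mathcal{U}$; the contraction inequality above is a non-expansion $\|x_{k+1} - x^*\| < \|x_k - x^*\|$, so the orbit is trapped and the local linear rate propagates to all $k$. A subtle but routine point in this trapping step is that the contraction constant must be strictly less than one, which is ensured by $\mu' > 0$ and the step $\gamma = 1/L$; the analysis would break down if $\gamma$ were taken larger than $2/L$, which is why the step size prescribed in the theorem statement is crucial.
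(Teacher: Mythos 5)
The paper does not actually prove this statement: it is quoted from Nesterov's lectures and used as a black box in Theorem~\ref{thm:convRate}, so there is no internal proof to compare against, and what you reconstruct for the complexity claim is essentially the standard argument from the cited source, correctly executed. The descent lemma with step $1/L$ gives the per-step decrease $\frac{1}{2L}\|\nabla f(x_k)\|^2$; telescoping and boundedness below force the stopping test $\|\nabla f(x_k)\|\le\epsilon$ to fire within $O\bigl(L(f(x_0)-f^*)/\epsilon^2\bigr)$ iterations (your bound carries an extra factor $2$ relative to the constant displayed in the theorem, which is immaterial). Do note, however, that you silently replaced the stated step size $\gamma=1/\epsilon$ by $\gamma=1/L$. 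That is almost certainly the right reading of a typo --- with $\gamma=1/\epsilon$ and $\epsilon<L/2$ the descent lemma yields no decrease and the claim is false as literally written --- but it should be flagged as a correction of the statement rather than folded in as ``the natural reading.''

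The genuine gap is in the linear-rate clause. Linear convergence near a local minimizer does not follow from $L$-smoothness and boundedness below, and your way of importing the missing hypothesis is not sound: Theorem~\ref{thm:conv} (via the strict-saddle property of Lee et al.) only asserts that saddle points have a strictly negative Hessian eigenvalue; it gives no lower bound $\mu I\preceq\nabla^2 f(x^*)$ at local minima, and in any case it concerns the specific cost $G$ of the paper, not the generic $f$ of the present theorem. A function behaving like $\|x-x^*\|^4$ near its minimizer is $L$-smooth and bounded below, yet gradient descent converges only sublinearly there, so the clause is unprovable without an explicit nondegeneracy assumption --- indeed Nesterov's own local result assumes $\mu I\preceq\nabla^2 f(x^*)\preceq L I$ with $\mu>0$. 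The honest fix is to state the linear rate under that hypothesis (or a local Polyak--{\L}ojasiewicz inequality); once it is assumed, your argument is fine: continuity of the Hessian gives a ball around $x^*$ on which $f$ is $\mu'$-strongly convex and $L$-smooth, the step $1/L$ yields $\|x_{k+1}-x^*\|^2\le(1-\mu'/L)\|x_k-x^*\|^2$, and this contraction traps the iterates in the ball, so the rate propagates to all subsequent steps.
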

It follows that in our case the rate of convergence is bounded.
\begin{Theorem}[Rate of convergence]\label{thm:convRate}
	Let the points-set $P=\{{p_j }\}_{j =1}^J$ be sampled near a $d$-dimensional manifold in $\mathbb{R}^n$ and let the assumptions in Theorem~\ref{thm:conv} be satisfied. Let the cost function $G$, defined as in (\ref{eq:1}), be an $L$-smooth function. For any $\epsilon >0$, let $Q^*$ be a local fixed-point solution of the gradient descent iterations, with step size $\gamma = \frac{1}{\epsilon}$. Set the termination condition as $\|\nabla G(Q)\| \leq \epsilon$. Then $Q^*$ is an $\epsilon$-first-order stationary point that will be reached after $k = \frac{L(G(Q^{(0)}) – G(Q^*))}{\epsilon^2}$ iterations, where $L=l^2$ and $l$ is given in \eqref{eq:l}.
\end{Theorem}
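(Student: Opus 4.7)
The plan is to apply Nesterov's result (Theorem~\ref{thm:Nestrove}) essentially as a black box, having already done most of the heavy lifting in the proof of Theorem~\ref{thm:conv}. First I would verify that $G$ is $L$-smooth with the claimed constant $L=l^2$. Since $G$ is twice continuously differentiable, it suffices to bound the operator norm of its Hessian $H=\nabla^2 G(Q)$ uniformly on the domain under consideration. Exactly this bound was obtained inside the proof of Theorem~\ref{thm:conv} by combining the eigenvalue localization result of Theorem~\ref{thm:eigValueBounds} with the explicit entries of $H$: we got $\|H\|_2 = \max\{\lambda_{\max}^2,\lambda_{\min}^2\}\le \max\{u^2,l^2\}=l^2$, where $u$ and $l$ are as in \eqref{eq:l}. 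Invoking the mean value theorem for $\nabla G$ along the segment joining any two iterates then yields $\|\nabla G(Q_1)-\nabla G(Q_2)\|\le l^2\,\|Q_1-Q_2\|$, which is precisely $L$-smoothness with $L=l^2$.

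Next I would check the hypotheses of Theorem~\ref{thm:Nestrove} on the domain of iteration. The step size and termination rule are already fixed to match the statement ($\gamma=1/\epsilon$ and $\|\nabla G(Q)\|\le \epsilon$), so the only remaining ingredient is boundedness from below. Globally this is delicate because $\eta(r)\to\infty$ as $r\to 0$ while $\lambda_{i}<0$, so $G$ can diverge to $-\infty$ when two $q_i$'s collide; I would therefore argue locally, restricting attention to the sublevel set around the local fixed point $Q^*$ into which the iterates fall once the initialization is sufficiently close to $Q^*$. On such a sublevel set the pairwise distances $\|q_i-q_{i'}\|$ stay bounded away from zero, $G$ is continuous and hence bounded below by $G(Q^*)$ restricted to that neighborhood, and the Hessian bound used above remains valid. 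This justifies the ``close enough to the local minimum'' clause and lets me apply Theorem~\ref{thm:Nestrove} directly.

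With both hypotheses in hand the iteration count drops out: Theorem~\ref{thm:Nestrove} guarantees that an $\epsilon$-first-order stationary point is reached in at most $k=L(G(Q^{(0)})-G(Q^*))/\epsilon^2$ steps, and substituting $L=l^2$ from the first paragraph gives the formula claimed in the statement. The linear-rate addendum follows from the standard observation that once the iterate enters a neighborhood of $Q^*$ on which the Hessian has a definite sign, the gradient descent recursion $Q^{(k+1)}-Q^{(k)}=-\gamma\nabla G(Q^{(k)})$ becomes a contraction and produces the geometric factor $|1-c\lambda_{\min}/\lambda_{\max}|$ mentioned in the preamble to the theorem.

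The main obstacle, in my view, is the boundedness-from-below step: the repulsion term makes $G$ unbounded in the raw formulation, so the proof must be phrased carefully in terms of a local sublevel set around $Q^*$ and must justify that the gradient-descent trajectory with the prescribed step size cannot escape this set before termination. The Hessian bound work is already reusable from Theorem~\ref{thm:conv}, and the counting step is a one-line appeal to Theorem~\ref{thm:Nestrove}; everything else is bookkeeping about constants.
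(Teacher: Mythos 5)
Your proposal follows essentially the same route as the paper: the paper's proof is a one-line appeal to Theorem~\ref{thm:Nestrove}, noting that the $L$-smoothness constant $L=l^2$ was already established via the Hessian eigenvalue bounds in the proof of Theorem~\ref{thm:conv}. In fact your write-up is more careful than the paper's, since you explicitly flag and handle the boundedness-from-below issue caused by the repulsion term (with $\lambda_i<0$ and $\eta(r)\to\infty$ as $r\to 0$) by restricting to a local sublevel set around $Q^*$, a point the paper's proof passes over silently.
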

\begin{proof} 
	It is quite easy to verify that $G(Q)$ satisfies all the conditions of Theorem~\ref{thm:Nestrove}; in particular, the $L$-smoothness condition was proven above.
\end{proof} 
\begin{remark}
	In  our  case,  due  to  the  bound  on  $l$ in (\ref{eq:l}), we see that $k$ is of order $\frac{1}{h^{10}}$. However, in practice,  in our numerical examples, fewer iterations were needed to achieve convergence. In an example presented in the following section, with approximately \textup{800} noisy points $P$ and \textup{160} points in $Q$ (sampled in a certain area around a specific point), of a two-dimensional manifold embedded into a \textup{60}-dimensional space, the method converged in approximately \textup{500} iterations which took around \textup{90} seconds. When the initial set $Q$ was randomly sampled from $P$, we observed convergence in \textup{50} iterations which took 11 seconds.
\end{remark}
\begin{remark}It should be emphasized that the calculations of the gradient for each point are independent of one another, and in order to reduce the execution time, they can be run in multiple threads.
\end{remark}

\subsection{Uniqueness }
\label{sec:Uniqueness}
As shown in the previous section, convergence to a local minimum is guaranteed. However, since the cost function in \eqref{eq:1} is non-convex, a unique global solution can not be ensured. In order to address the uniqueness question, we have to rephrase the notion of uniqueness for our case. We do not refer to the uniqueness of the set $Q$, since there may be many sets $Q$ which satisfy the cost function \eqref{eq:1}, but to a common property of these optimal $Q$ sets, the fill-distance of their points. For instance, given a solution, its linear transformation can still minimize \eqref{eq:1}. This scenario is illustrated in Figure \ref{fig:uniqueness_v2}. In this example, which will be explained in detail in the experimental section, the orthogonal matrices in $\mathbb{R}^2$, which are represented by their angle, form a manifold. Although the two sets in  Figure \ref{fig:uniqueness_v2} (left and right) differ, they can still be solutions to the problem. 
\vspace{-5mm}
\begin{figure}[H]
	\centering
	\label{fig:gi}\includegraphics[width=\textwidth,height=4.0cm,keepaspectratio]{./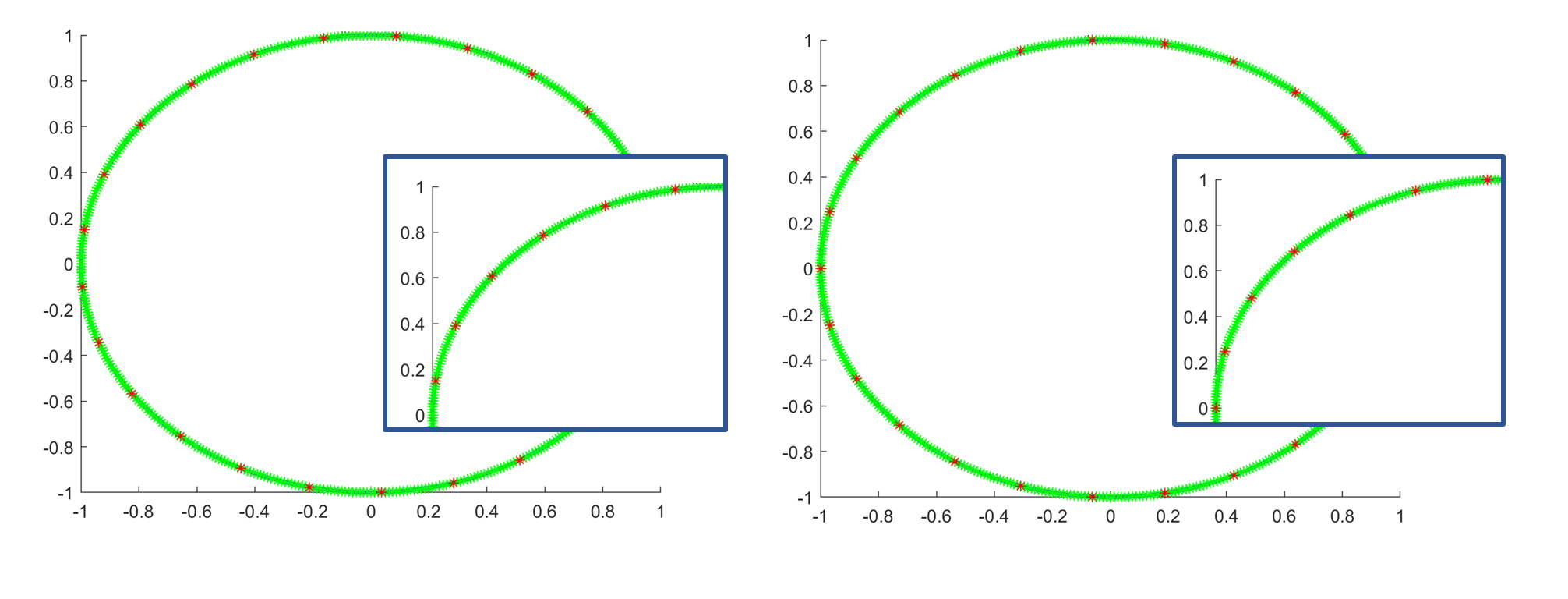}
	\caption{Manifold of orthogonal matrices: each matrix is represented by means of an angle (green), sampled with the same fill-distance, in two manners (red).}
	\label{fig:uniqueness_v2}
\end{figure}
\vspace{-5mm}
Thus the appropriate notation of uniqueness of the solution is as follows:
\begin{definition}\label{def:def_U1} 
	Let $Q_1$ and $Q_2$ be two point-sets uniformly sampled from a manifold $\mathcal{M}$, with fill-distance $h_2^1$ and $h_2^2$, respectively. Then $Q_1$ and $Q_2$ are said to be ``distribution equivalent'' if their fill-distances coincide ($h_2^1=h_2^2$). For a fixed fill-distance $h_q$, the corresponding class of distribution equivalent sets is denoted here by $[h_q]$.
\end{definition}

\begin{remark}\label{thm:uniqness} 
	Let $Q^*$ be a solution of the optimization problem \eqref{eq:1}, from points $P$. Then $Q^*$ is unique up to the equivalence class $[h_q]$. This follows from the definition of $h_q$, which specifies the number of $P$ points served by a single $q_i$, which uniquely define the equivalence class $[h_q]$ of the solution $Q^*$.
\end{remark}



\subsection{Complexity of the MLOP Algorithm}
\label{sec:complexity}

The complexity of the MLOP algorithm described in Algorithm \ref{alg:Alg1_} is based on a pre-step and a gradient decent iterations. As described in Section \ref{sec:highDimDistSec}, due to the curse of dimensionality and presence of noise all the norms are calculated in a lower dimension $m$. Thus, a pre-step to the MLOP algorithm is reducing the dimension of $P$ from $n$ to $m$ (where $m\ll n$), and have the complexity $nmJ$. In addition in every gradient descent step, and for every $q_i$ we reduce the dimension of current $Q$ which results in the complexity of $nmI$. As a result, a single gradient descent step is $O(I(nmI+I+J))$. With efficient neighboring calculation, this can be reduced to $O( I(nm\hat{I}+\hat{J})) $, where $\hat{I}$ and $\hat{J}$ are the numbers of points in the support of the weight function with respect to the $Q$ and $P$ sets, respectively (for instance, in the numerical examples below $\hat{J}$ was around 30 points, instead of 900 points in $P$). 
These operations are repeated $k$ times until convergence, where $k$ is bounded as in Theorem~\ref{thm:convRate}. Thus, the overall complexity is $O(nmJ + k I(nm\hat{I}+\hat{J}))$.

\begin{corollary}
	\label{complex_MLOP}
	
	Given a point-set $P=\{{p_j }\}_{j =1}^J$ sampled near a $d$-dimensional manifold $\mathcal{M} \in \mathbb{R}^n$,  let $Q=\{q_i \}_{i=1}^I$ be a set of points that will provide the desired manifold reconstruction. Then the complexity of the {\rm MLOP} algorithm is $O(nmJ + k I(nm\widehat{I}+\widehat{J}))$, where the number of iterations $k$ is bounded as in Theorem~{\rm \ref{thm:convRate}}, $m \ll n$ is the smaller dimension to which we reduce the dimension of the data, and $\widehat{I}$ and $\widehat{J}$ are the numbers of points in the support of the weight functions $\hat w_{i,i'}$, $w_{i,j}$ with the $Q$-set and $P$-set, respectively. Thus, the approximation is linear in the ambient dimension $n$, and does not depend on the intrinsic dimension $d$.
		
\end{corollary}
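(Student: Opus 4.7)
My plan is to split the accounting into the one-time preprocessing of Algorithm~\ref{alg:Alg2_} and the cost of a single gradient-descent step of Algorithm~\ref{alg:Alg1_}, then multiply the per-step cost by the iteration bound supplied by Theorem~\ref{thm:convRate}.

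For the preprocessing, the three steps of Algorithm~\ref{alg:Alg2_} cost $O(Jm)$ to draw $G$, $O(nmJ)$ to form $B=P^{\mathrm t}G$, and $O(nm^2)$ for the QR factorization of the tall-and-thin matrix $B\in\mathbb{R}^{n\times m}$; under the standing assumption $m\ll n,J$ the dominant term is $O(nmJ)$. I would also precompute and cache the sketched samples $\{S^{\mathrm t} p_j\}_{j=1}^J$ at a further $O(nmJ)$, so that any later distance lookup $\|S^{\mathrm t}(q_i-p_j)\|$ costs only $O(m)$.

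For one gradient-descent step, I would fix $q_{i'}\in Q$ and bound the cost of evaluating $\nabla G(q_{i'}^{(k)})$ via~\eqref{eq:GradG}. The support-size analysis of Section~\ref{sec:optH} guarantees that only $\widehat{J}$ of the $P$-points and $\widehat{I}$ of the $Q$-points contribute non-negligibly. The $P$-neighbors can be located from the cached sketches at cost $O(\widehat{J})$ for the distances and weights, plus $O(n\widehat{J})$ to accumulate their contribution to the gradient in $\mathbb{R}^n$. The $Q$-points move each iteration and therefore must be re-sketched on the fly at cost $O(nm\widehat{I})$; once this is done, the coefficients $\alpha_j^{i'},\beta_i^{i'}$ from \eqref{eq:alpha}--\eqref{eq:beta} are each $O(m)$ to evaluate, and the corresponding gradient contribution is $O(n\widehat{I})$. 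Summing across all $I$ points yields a per-iteration cost dominated by $O(I(nm\widehat{I}+\widehat{J}))$; the coarser $O(I(nmI+I+J))$ noted just before the corollary corresponds to disabling the neighborhood filter.

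Combining the preprocessing with $k$ gradient-descent steps and using the bound on $k$ from Theorem~\ref{thm:convRate} produces the claimed $O(nmJ+kI(nm\widehat{I}+\widehat{J}))$. The linearity in the ambient dimension $n$ and the absence of explicit $d$-dependence are then read off directly from this formula, since $m$, $\widehat{I}$, and $\widehat{J}$ are implementation parameters that can be chosen without reference to the intrinsic dimension. I do not expect a substantive obstacle beyond bookkeeping; the one delicate point worth flagging is the careful separation between the cached $P$-sketches and the $Q$-sketches that must be recomputed every iteration, since conflating the two would inflate the per-step cost and erase the benefit of sketching in the first place.
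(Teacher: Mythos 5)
Your accounting matches the paper's own argument: a one-time $O(nmJ)$ sketching pre-step plus $k$ gradient-descent iterations, each costing $O(I(nm\widehat{I}+\widehat{J}))$ once the neighborhood filtering of Section~\ref{sec:optH} is used, with $k$ bounded by Theorem~\ref{thm:convRate}. Your version simply fills in details the paper leaves implicit (the QR cost, caching the sketched $P$-points, re-sketching the moving $Q$-points), so it is correct and essentially the same proof.
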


\section{Numerical Examples}
\label{sec:NumericalExamples1}
In this section, we present some numerical examples which demonstrate the validity of our method, as well as its robustness under different scenarios, for example, diverse manifold topologies, different amounts of noise, and many intrinsic dimensions. In all the examples the input points $P$ were sampled uniformly in the parameter space. Next, a uniform noise $U(-\sigma, \sigma)$ with magnitude $\sigma$ was added. Then the set $Q$ was initialized by sampling from the set $P$ around a certain selected point. In what follows we illustrate the results of applying the MLOP algorithm.

\subsection*{One-Dimensional Orthogonal Matrices}
\label{sec:O2_example}
Consider the case of the manifold $O(2)$ of orthogonal matrices, embedded into a 60-dimensional linear space by using the parameterization
\begin{eqnarray*}\hat p=[\cos (\theta), -\sin(\theta), \sin(\theta), \cos(\theta), 0, \dotsc, 0]\,,\end{eqnarray*} 
where $\theta \in [-\pi, \pi]$. The input data $\hat P$ were constructed by sampling 500 equally distributed points in the parameter space. Next, we randomly sampled an orthogonal matrix $A \in \R^{60 \times 60}$, and created a new point-set via non-trivial vector embedding
\begin{equation} 
P = A \hat P \,.
\end{equation}
Later we added a uniform noise $U(-0.2, 0.2)$, and initialized the set $Q$ selecting $50$ points around a certain point. Figure \ref{fig:O2_img} left illustrates the first two coordinates of the points in our set (after a multiplication with $A^{-1}$). The noisy sampled points are shown in green, while the initial reconstruction points are shown in red. Figure \ref{fig:O2_img} right shows the reconstructed and denoised manifold of orthogonal matrices, after $500$ iterations of the MLOP algorithm (red).
\vspace{-12mm}
\begin{figure}[H]
	\centering
	\label{fig:k}\includegraphics[width=\textwidth,height=5.5cm,keepaspectratio]{./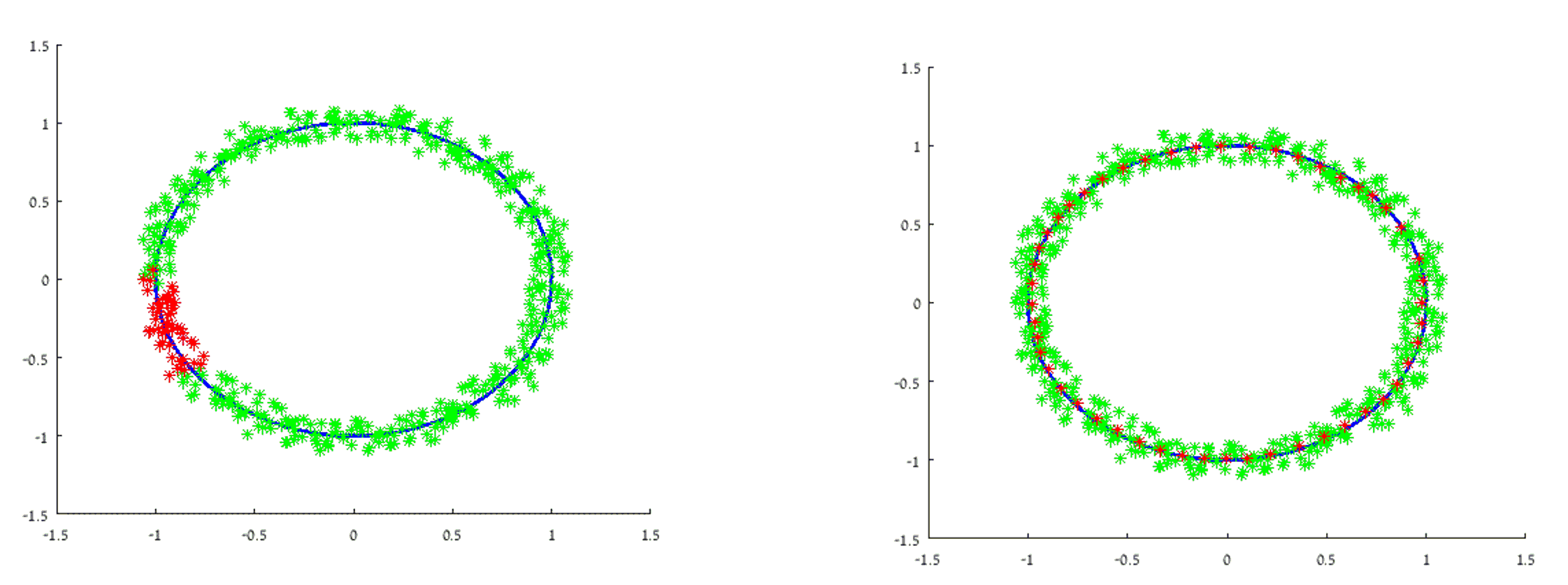}
	\caption{Manifold of orthogonal matrices embedded into a 60-dimensional space. Shown are the first two coordinates of the point-set (after multiplication with $A^{-1}$). Left: Scattered data with uniformly distributed noise $U(-0.2; 0.2)$ (green), and the initial point-set $Q^{(0)}$ (red) Right: The resulting point-set of MLOP algorithm after 500 iterations, $Q^{(500)}$ (red) overlaying the noisy samples (green).}
	\label{fig:O2_img}
\end{figure}
\vspace{-3mm}
\subsection*{Three-Dimensional Cone Structure}
\label{sec:Cone_example}
Next, we demonstrate the ability of the MLOP to cope with a geometric structure of different dimensions at different locations. Here we combined a 3-dimensional manifold, namely, a cone structure, with a one-dimensional manifold, namely, a line segment. This object was embedded into a 60-dimensional linear space. The cone’s parameterization used was
\begin{eqnarray*} p=t v_1+\frac {e^{-{R^2}}}{\sqrt 2}(\cos(u)v_2+ \sin(u)v_3)\,, \end{eqnarray*}  
where $v_1=[1,1,1,1,0, \dotsc, 0], v_2=[0, 1, -1, 0, 0,\dotsc,0], v_3=[1, 0, 0, -1, 0,\dotsc,0]$, $(v_1,v_2,v_3) \in \R^{60}$, $t\in [0,2]$, $R\in [0,2.5]$, and $u \in [0.1\pi,1.5\pi]$. We sampled 720 points from the structure with added uniformly distributed noise of magnitude $0.2$. The initial set $Q^{(0)}$ of size $144$ was selected (Figure \ref{fig:cone_img}  left), and $500$ iterations of the MLOP were performed to reconstruct and denoise the geometrical structure (Figure \ref{fig:cone_img} right).
\vspace{-12mm}
\begin{figure}[H]
	\centering
	\label{fig:j}\includegraphics[width=\textwidth,height=5.5cm,keepaspectratio]{./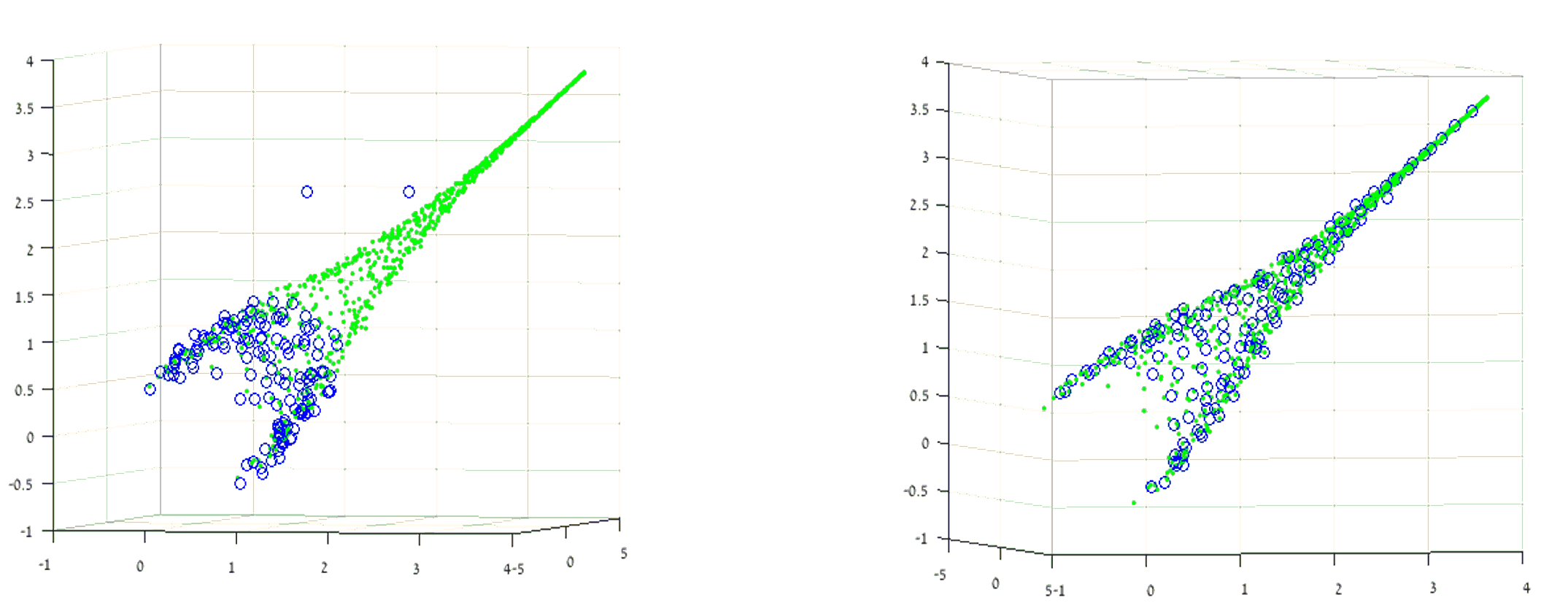}
	\caption{Geometrical structure of changing dimension. Combination of a cone and a line segment, embedded into a 60-dimensional space. The first three coordinates of the point-set are shown. Left: Scattered data with uniformly distributed noise $U(-0.2; 0.2)$ (green), and the initial point-set $Q^{(0)}$ (blue) Right: The point-set generated by the MLOP algorithm after 500 iterations, $Q^{(500)}$ (blue) overlaying the noisy samples (green).} 
	\label{fig:cone_img}
\end{figure}
\subsection*{Two-Dimensional Cylindrical Structure}
\label{sec:Cylindr_example}
In the next example, we embedded a two-dimensional cylindrical structure into a 60-dimensional linear space. We sampled the structure using the parameterization
\begin{eqnarray*}p=t v_1+\frac{R}{\sqrt 2}(\cos(u)v_2+  \sin(u)v_3)\,,\end{eqnarray*}
where $v_1=[1,1,1,1,1, \dotsc ,1]$, $v_2=[0,1,-1,0,0, \dotsc ,0], v_3=[1,0,0,-1,0, \dotsc ,0]$,  
$(v_1,v_2,v_3 \in \R^{60})$, $t \in [0,2]$ and $u \in [0.1 \pi,1.5 \pi]$. Using this representation 816 equally distributed (in parameter space) points were sampled with uniformly distributed noise (i.e., $U(-0.1, 0.1)$). As can be seen in Figure \ref{fig:cylinder_img} left, the initial set $Q^{(0)}$ of size 163 was selected very roughly, and 500 iterations of the MLOP were performed to reconstruct the cylindrical structure, shown in Figure \ref{fig:cylinder_img} right.
\vspace{-10mm}
\begin{figure}[H]
	\centering
	\label{fig:h}\includegraphics[width=\textwidth,height=5.5cm,keepaspectratio]{./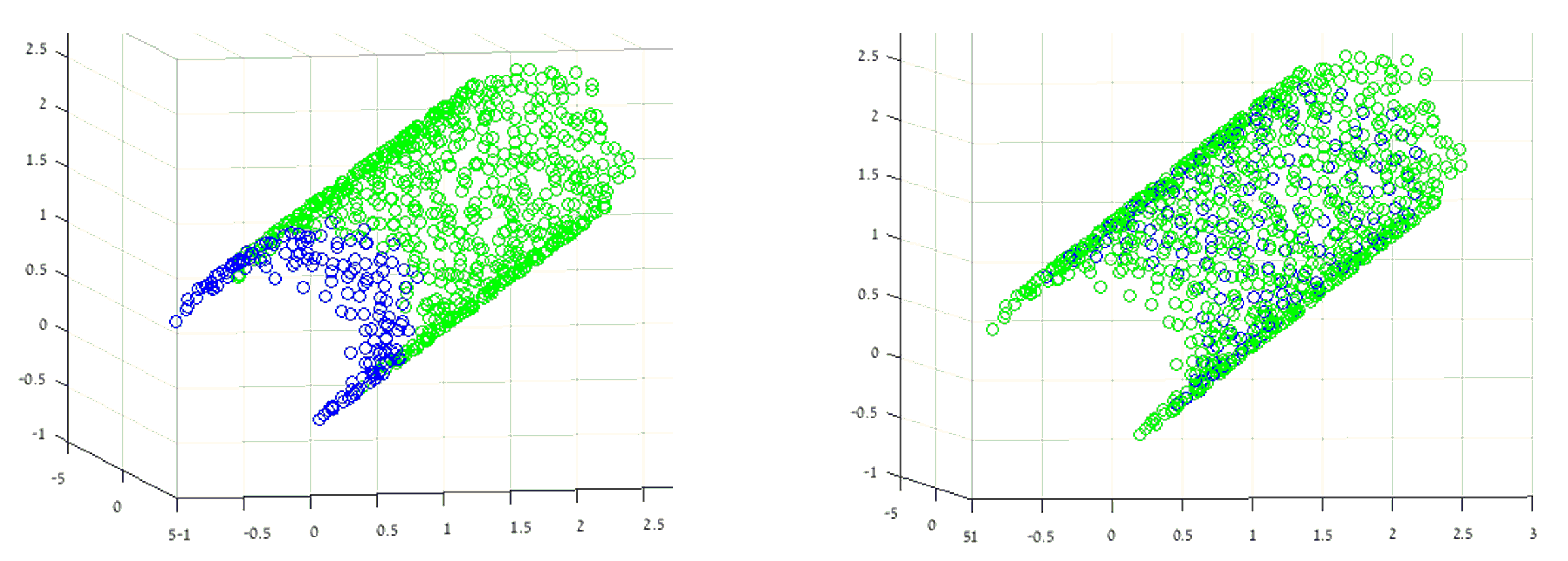}
	\caption{Cylindrical structure embedded into a 60-dimensional space. The first three coordinates of the point-set are shown. Left: Scattered data with uniformly distributed noise $U(-0.1; 0.1)$ (green), and the initial point-set $Q^{(0)}$ (blue) Right: The point-set generated by the MLOP algorithm after 500 iterations, $Q^{(500)}$ (blue) overlaying the noisy samples (green).} 
	\label{fig:cylinder_img}
\end{figure} 
\subsection*{Robustness to Noise}
The noise level has a direct influence on the accuracy of the reconstruction. Here we examine the robustness of the MLOP under various levels of noise. Our test was performed on the two-dimensional cylindrical structure embedded into 60-dimensions, with various amounts on noise magnitude ($0$, $0.1$, $0.2$, and $0.5$). The accuracy was calculated as the relative error of the reconstruction $Q$, against a densely sampled noise-free cylindrical structure. The norm used for accuracy calculations was the one that is based on linear sketching, as defined in Section \ref{sec:highDimDistSec}. As can be seen in Figure \ref{fig:cylinder_noise_vs_error}, even with a noise level of 0.5, the reconstruction quality is satisfactory (with a relative error of $0.15$).
\vspace{-10mm}
\begin{figure}[H]
	\centering
	\label{fig:i}\includegraphics[width=\textwidth,height=5cm,keepaspectratio]{./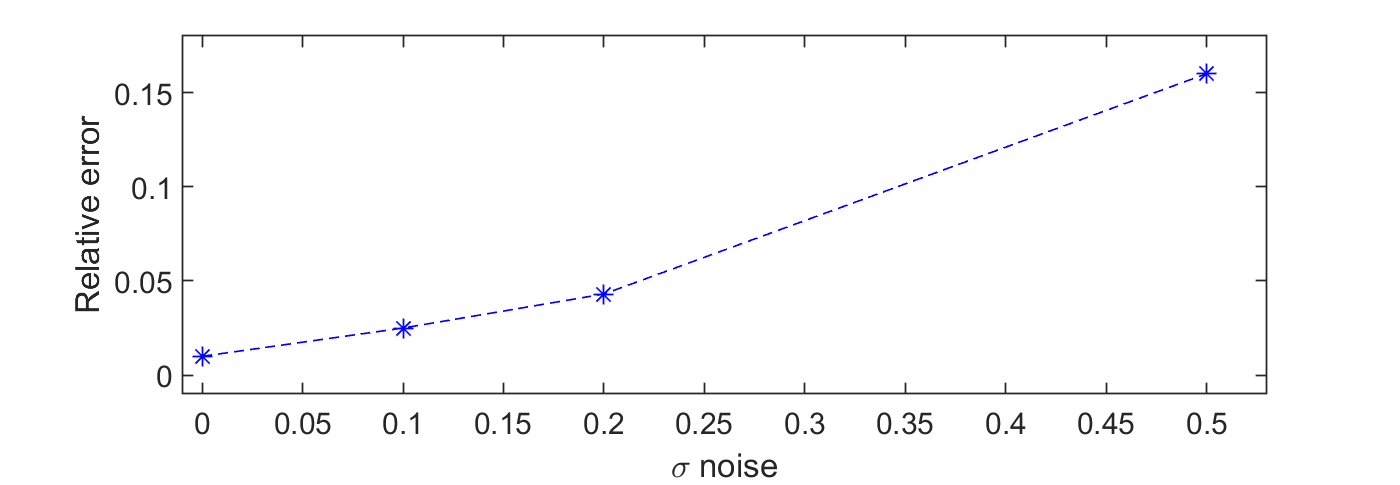}
	\caption{Effect of noise level on the reconstruction accuracy of a cylindrical structure embedded into a 60-dimensional space.}
	\label{fig:cylinder_noise_vs_error}
\end{figure}

\subsection*{Six-dimensional cylindrical structure}
\label{sec:10dimCylinder_example_1}
Finally, we tested our method on manifolds of the higher dimension by utilizing an $n$-sphere to generate an $(n+1)$-dimensional cylinder (in the example of the two-dimensional cylinder, we used a circle to generate the structure). Here, we utilized a five-dimensional sphere to build a six-dimensional manifold, using the parameterization
\begin{align*}
x_1 = R \cos(u_1) \,,\quad
x_2 = R \sin(u_1)\cos(u_2),\quad 
\ldots,\quad 
x_{6} = R \sin(u_1)\sin(u_2)\cdots\sin(u_5)\sin(u_6)\,. 
\end{align*}
We then embedded the sampled data in a 60-dimensional space
\begin{eqnarray}p=t v_0 + R^2 [x_1,x_2, x_3, x_4, x_5, x_6, 0, \dotsc ,0]\,,\end{eqnarray}
where $R = 1.5$, $t \in [0,2]$, $u_i \in [0.1 \pi, 0.6 \pi]$, and $v_0 \in \R^{60}$ is a vector with 1's in positions $1,...,d+1$ and 0 in the remaining positions. In this test, we sampled 1200 points from this manifold and added a noise $U(-0.1, 0.1)$. The initial reconstruction set was chosen to consist of randomly selected 460 points. The method converged after approximately 300 iterations. To avoid trying to visualize a six-dimensional manifold, we plot in Figure \ref{fig:sixD_cylinder_noise_vs_error} the cross-section of the cylindrical structure in three-dimensions. We evaluate the efficiency of the denoising effect by calculating the maximum relative error, root mean square error, and variance of both the initial $Q^{(0)}$ points and the noise-free reconstruction set $Q^{(300)}$ with respect to the closest point in the clean reference data. As a result, the errors if $Q^{(0)}$ are $0.083$, $0.32 \pm 0.0007$, and of the noise-free reconstruction are $0.058$, $0.28 \pm 0.0006$. Thus, we see that in this scenario of non-trivial intrinsic dimension of the manifold the error decrease dramatically. In addition, the fill-distance of the initial random $Q^{(0)}$ set was $0.36$, and $0.32$ in the reconstruction. Thus, we also observe the effect of quasi-uniform sampling after applying the MLOP.

\begin{figure}[H]
	\centering
	\label{fig:i}\includegraphics[width=\textwidth,height=5.5cm,keepaspectratio]{./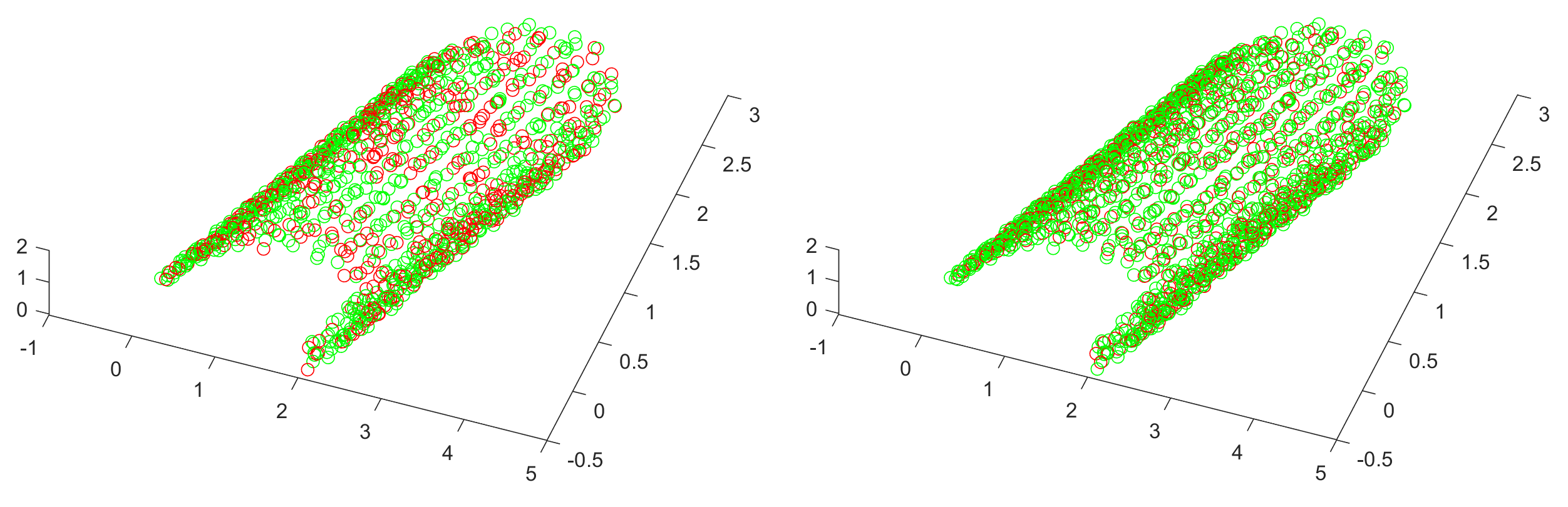}
	\caption{Six-dimensional cylindrical structure embedded in a 60-dimensional space. The cross-section of the six-dimensional cylindrical structure is plotted in three-dimensions. Left: Scattered data with uniformly distributed noise $U(-0.1; 0.1)$ (green), and the initial point-set $Q^{(0)}$ (red) Right: The point-set generated by the MLOP algorithm after 300 iterations, $Q^{(300)}$ (red) overlaying the noisy samples (green).}
	\label{fig:sixD_cylinder_noise_vs_error}
\end{figure}

\subsection*{Applications to Image Processing}
\label{sec:elipses_example}
Manifold denoising and reconstruction methodology can be also applied to image processing problems. At the beginning of this paper, we described the cryo-EM (in Figure \ref{fig:cryoFig}) which motivated our study. In this framework a manifold is created by acquiring images of a single object in various directions. As a preliminary example, before addressing the real case of cryo-EM, we simulated data that resemble the cryo-EM conditions. Specifically, we sampled $900$ images of ellipses of size $20 \times 20$. The ellipses were centered and no rotations were used. Thus, we have $900$ samples of a 2-dimensional submanifold embedded in $\R^{400}$. We added a Gaussian noise $N(0; 0.05)$ to each pixel. Figure \ref{fig:eliipses_1_img} shows the sample of the manifold (with some zoom-in examples), along with a graph where the $(x,y)$ - coordinates of each point are the ellipse radii. For the execution of the MLOP, we took $180$ ellipses as the initial sample points (Figure \ref{fig:eliipses_2_img} left). As can be seen in Figure \ref{fig:eliipses_2_img} right, after 1000 iterations the samples were cleaned, while the radii distribution graph shows that the radii domain is fully sampled.

We evaluated the MLOP denoise performance on the ellipses samples  $Q$. We measured the SNR as $SNR = \frac{\mu}{\sigma}$ on the background pixels of each ellipse image (where $\mu$ is the average signal value, while $\sigma$ is the standard deviation). We observe that the median SNR of the set $Q$ increased after applying the MLOP denoising, from $15.6$ to $36.5$. This gives us a quantitative measure of the denoising performed by the MLOP (as can also be seen in Figure \ref{fig:eliipses_2_img} in the zoomed-in areas).

\vspace{-10mm}
\begin{figure}[H]
	\centering
	\label{fig:l}\includegraphics[width=\textwidth,height=7cm,keepaspectratio]{./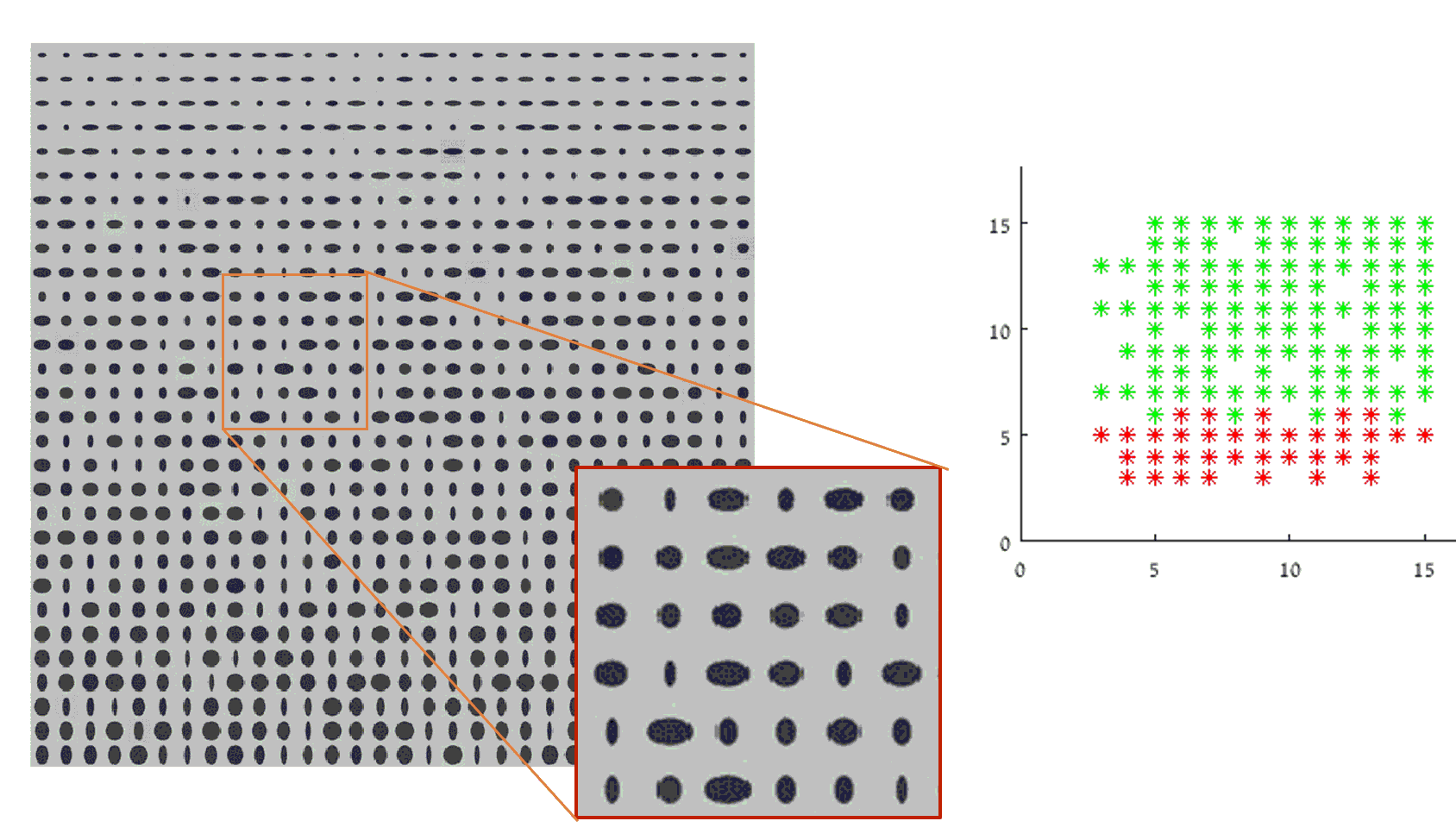}
	\caption{Left: Images of ellipses with varying radii that were sampled from a 2-dimensional manifold, prior to adding noise, which will form the $P$ set. Right: a graph depicting the radii of the ellipses, with the coordinates of points given by these radii. The manifold samples are shown in green ($P$), while the initial set $Q^{(0)}$ is shown in red.}
	\label{fig:eliipses_1_img}
\end{figure}
\vspace{-14mm}
\begin{figure}[H]
	\centering
	\label{fig:o}\includegraphics[width=\textwidth,height=7cm,keepaspectratio]{./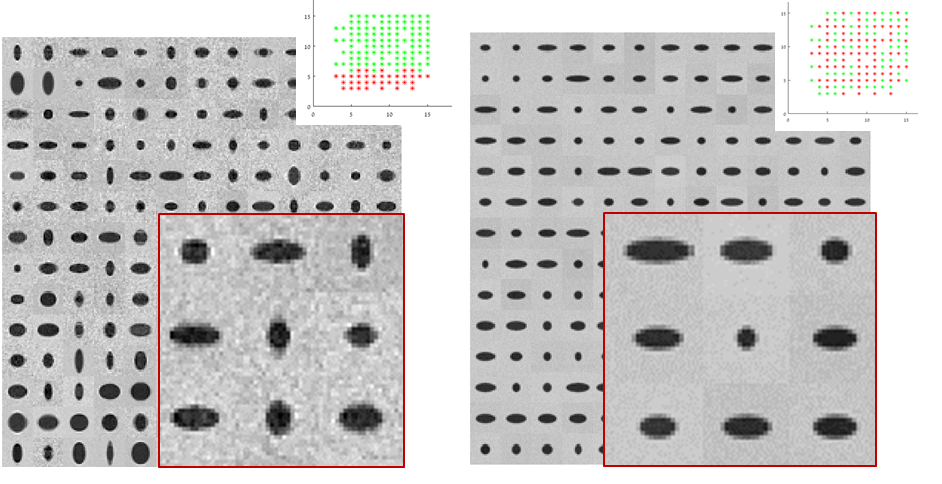}
	\caption{The samples that were used to reconstruct the manifold. Each side of the figure consists of an image of the samples, a zoomed-in area, and a graph of sample radii. The manifold samples are shown in green, while the initial set is shown in red. Left: the initial configuration of points sampled from the 2-dimensional manifold. Right: the manifold reconstruction configuration after 1000 iterations.}
	\label{fig:eliipses_2_img}
\end{figure}

\section{MLOP Denoise Benefits}
The current section dealt both with manifold reconstruction and cleaning of high amounts of noise. The denoising property was induced by the first term in \eqref{eq:1}, which performs smoothing of $p_j$ samples in the neighborhood of the examined point $q_i$ . This term is inspired by the $L_1$-median \cite{vardi2000multivariate}, and thus is robust to high amounts of noise. This fact was demonstrated in the "Robustness to Noise" subsection in \cite{faigenbaum2016algorithmic}, where the effect of various levels of noise on reconstruction accuracy was examined. The test demonstrated the robustness of the MLOP method to various amounts of noise magnitude ($0$, $0.1$, $0.2$, and $0.5$), on a two-dimensional cylindrical structure embedded into 60-dimensions. The calculation of relative error of the reconstruction $Q$, against a densely sampled noise-free cylindrical structure, showed good results even at a noise level of 0.5 (with a relative error of $0.15$). Thus, it is natural to use MLOP as a pre-processing step prior to performing mining tasks on the data. 

In this section, we demonstrate the effectiveness of high-dimensional denoising in the case of local PCA. In our test we examine a set of points $X=\{x_i\}$, with a fill-distance $h$. We calculate PCA for each point $x_i$ using its neighboring points ${x_j}$, which maintain the constraint  $\|x_i-x_j\| < h$. Next, we extract the first eigenvector and evaluate its accuracy with respect to the first eigenvector of a PCA executed on clean reference data. Specifically, for each point $x_i$ we find the closest point in the clean reference data and calculate the cosine distance between the corresponding PCA first eigenvectors (the error is given in degrees). Next, we determine the median of the errors stemming from all the points $X$. It is important to note that the error is tightly connected with the number of points in the set, with their fill-distance, and naturally with the noise levels. For example, on clean data with 160 points randomly sampled from a manifold, the error was 11.8, while with 7000 points, the error decrease to 0.2. This stems from the fact that taking a larger number of points in the neighbor of a point $x_i$ leads to a more accurate eigenvector. This fact has to be taken into account in error analysis.

The numerical calculations were performed on the example of a two-dimensional cylindrical structure embedded into a 60-dimensional linear space. We sampled the structure using the parameterization
\begin{eqnarray*}p=t v_1+\frac{R}{\sqrt 2}(\cos(u)v_2+  \sin(u)v_3)\,,\end{eqnarray*}
where $v_1=[1,1,1,1,1, \dotsc ,1]$, $v_2=[0,1,-1,0,0, \dotsc ,0], v_3=[1,0,0,-1,0, \dotsc ,0]$  
$(v_1,v_2,v_3 \in \R^{60})$, $t\in [0,2]$ and $u \in [0.1 \pi,1.5 \pi]$. Using this representation, 816 uniformly distributed (in parameter space) points were sampled with uniformly distributed noise (i.e., $U(-0.2, 0.2)$). As can be seen in Figure \ref{fig:pca1} left, after 500 iterations of the MLOP algorithm, the cylindrical structure was reconstructed with high accuracy (red points).

The experiments testing the efficiency of MLOP denoising were carried out on five data sets, all of size 160:
\begin{enumerate}[noitemsep]
	\item Noise-free data. 
	\item Noise data with additive noise of 0.1. 
	\item Data denoised by the MLOP from the data in item 2.
	\item Noisy data with additive noise of 0.2. 
	\item Data denoised by the MLOP from the data in item 4. 
\end{enumerate}	

The results for noise levels of 0.1 and 0.2 are presented in Figure \ref{fig:pca1} right. To achieve a robust error value, we performed ten bootstrap iterations for the "noise-free", as well as "noisy data" data-sets, where we randomly sampled the manifold, and calculated the median PCA error of the iterations. As expected, the effect of the MLOP denoising is to improve the accuracy of the local PCA calculations. One can see that the noise level has a small effect on the error (increasing it from 7.9 to 8.2, for the 0.1 and 0.2 noise level respectively). An additional benefit is that the accuracy of the denoised data is superior the one of the is noise-free data. The reason for this is the quasi-uniform manifold sampling which MLOP carries out accordingly due to the second term in \eqref{eq:1}, while the noise-free samples come from randomly sampled points (which not necessarily sample the manifold uniformly).

\vspace{-10mm}
\begin{figure}[H]
	\centering
	\label{fig:aa1}\includegraphics[width=\textwidth,height=6cm,keepaspectratio]{./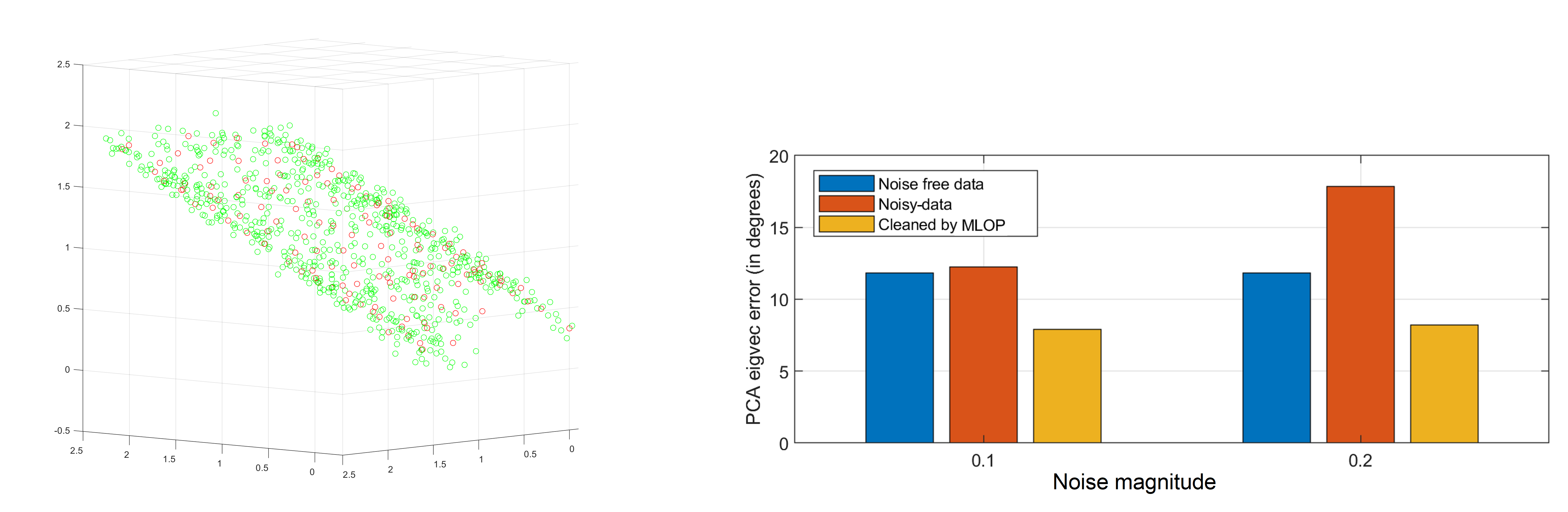}
	\caption{Left: Cylindrical structure, sampled with noise $U(-0.2,0.2)$, and embedded in $\R^{60}$. The figure presents the first three coordinates of the points set. The point-set generated by the MLOP algorithm after 500 iterations, $Q^{(500)}$ (red) overlaying the noisy samples (green). Right: illustration of the MLOP denoising effect on the accuracy of PCA calculations. The graphs present the error of the first eigenvector of local PCA calculated on noise-free, noisy, and denoised data.}
	\label{fig:pca1}
\end{figure}

%
%
%
%


\section{Discussion and Future Directions}
\label{sec:conclusions}

The big-data era gave rise to many challenges related to processing, analyzing, and understanding high-dimensional data. Among these challenges are the presence of noise, outliers, incomplete data, or insufficient data. In this paper, we introduced a framework that can address these issues, raised by high-dimensional data, in an efficient and robust manner. We propose a method for manifold reconstruction and denoising in high-dimensional space. Over the years, several solutions were suggested to cope with the reconstruction problem in high-dimensional space. However, they have a hard time handling noisy data, non-uniformly sampled, with no assumption on the data. As a result, manifold reconstruction in noisy conditions in high-dimensional space is still an open question. In our research, we address the manifold approximation question by extending the LOP \cite{lipman2007parameterization} algorithm to the high-dimensional case. We develop a new algorithm, called Manifold Locally Optimal Projection (MLOP). We look for a noise-free manifold reconstruction in high-dimensional space by solving a non-convex optimization problem which leverages L1-median generalization to high dimension, while requiring a quasi-uniform distribution of points in the reconstruction. We prove that the MLOP method converges to a local stationary solution with a bounded linear rate of convergence when the starting point is close enough to the local minimum. In addition, we showed that the manifold order of approximation is $O(h^2)$, where $h$ is the representative distance between the points, and the complexity is linear in the ambient dimension and does not depend on the intrinsic dimension.

The numerical examples demonstrate the applicability of the proposed method to various high-dimensional scenarios. This opens the door to different applications. First, it is possible to extend the methodology for approximating function on a manifold in noisy conditions (both in the function domain and in its codomain). Next, it is possible to enhance the MLOP to address the manifold repairing in the high-dimension problem, where input data have holes, and the target is to find a noise-free reconstruction of the manifold that will amend the holes and complete the missing information. Then, using the MLOP methodology it is possible to address the problem of multivariate k-L1-medians in high-dimensional cases. This can be achieved by finding the service centers by using the MLOP out-of-the-box. Last, but not least, the flexibility of selecting the amounts of points in the reconstruction and set the density paves the way for manifold upsampling and downsampling, and for manifold compression. Thus, we see the MLOP framework is a cornerstone method for handling high-dimensional noisy data.

\section*{Acknowledgments}
We would like to thank Dr. Barak Sober for valuable discussions, and comments. This study was supported by a generous donation from Mr. Jacques Chahine, made through the French Friends of Tel Aviv University, and was partially supported by ISF grant 2062/18.

\bibliographystyle{spmpsci}      
\bibliography{references_MLOP}

%
%

\end{document}